\newtheorem{teo}{Theorem}[section]
\newtheorem{oss}[teo]{Remark}
\newtheorem{Prop}[teo]{Proposition}
\newtheorem{lemma}[teo]{Lemma}
\newtheorem{Defi}[teo]{Definition}
\newtheorem{corollario}[teo]{Corollary}
\newtheorem{no}[teo]{Notation}
\newcommand{\rr}{_{^{_\mathcal{R}}}}
\newcommand{\cc}{_{^{_\HH}}}
\newcommand{\res}{\mathop{\hbox{\vrule height 7pt width .5pt depth 0pt
\vrule height .5pt width 6pt depth 0pt\,}}\nolimits}
\def \op{^\perp}
\newcommand{\LL}{\mathop{\hbox{\vrule height .5pt width 6pt depth
0pt \vrule height 7pt width .5pt depth 0pt\,}}\nolimits}
\newcommand{\ngr}{_{^{_{\mathrm Gr}}}}
\def \cin{{\mathbf{C}^{\infty}}}
\def\dim {\mathrm{dim}}
\def\dc {d_{CC}}
\def\ss{_{^{_{\HS}}}}
\def\eu {_{^{_{Eu}}}}
\def\xs{{\mathit{x}\ss}}
\def\x{{\mathit{x}\cc}}
\def\xsp{{\mathit{x}\op\ss}}
\def\xp{{\mathit{x}\op\cc}}
\def\g{h\cc}
\def\dg{\textit{grad}\cc}
\def\qq{\textit{grad}\ss}
\def \per  {\sigma^{2n}\cc}
\def \perh {\sigma^{2n}\cc}
\def\SC{{C^{\gg}}}
\def\UU{\mathcal{U}}
\def \nn{\nu\cc}
\def \nt{\nu^t\cc}
\def \XH{\mathfrak{X}\cc}
\def \XX{\mathfrak{X}}
\def \MS{\mathcal{H}\cc}
\def \MST{(\MS)_t}
\def \P{{\mathcal{P}}}
\def \PH{\P\cc}
\def \Om{\Omega}
\def \R{\mathbb{R}}
\def \div{\mathit{div}}
\def \GG{\mathbb{G}}
\def \gg{\mathfrak{g}}
\def\divh{\div\cc}
\def\lh{\mathcal{L}\ss}
\def\lg{\mathcal{D}\ss}
\def\tsc{\nabla^{^{_{\TT{S}}}}}
\def\gs{\nabla^{_{\HS}}}
\def\gc{\nabla^{_{\HH}}}
\def\UU{\mathcal{U}}
\def\UU{\mathcal{U}}
\def \nn{\nu_{_{\!\HH}}}
\def \cont{{\mathbf{C}}}
\def \Om{\Omega}
\def \R{\mathbb{R}}
\def \cji {c_{j\,i}(x)}
\def \C { C(x):=[\cji]_{j,i},\,\, {j=1,\ldots,m \,,\, i=1,\ldots,n}}
\def \X {X=(X_{1}, \ldots, X_{m_1})}
\def \X0 {X_{1}(0)\!=\!\partial_{x_{1}}, \ldots, X_{m_1}(0)\!=\!\partial_{x_{m_1}}}
\def \HG {\HH\GG}
\def \HS {\HH{S}}
\def \TG {\mathit{T}\GG}
\def \HH {\mathit{H}}
\def \TT {\mathit{T}}
\def \TS {\mathit{T}S}
\def \grad{\textit{grad}}
\def \C0H{\mathbf{C}_{0}^{\infty}(U,\HG)}
\def \C00{\mathbf{C}_{0}^{\infty}(U)}
\def \C01{\mathbf{C}_{0}^{1}(U)}
\def \L1{d\,\mathcal{L}^n}
\def \H1{\mathcal{H}_{{\rm cc}}^{1}}
\def \Vol{{{\mathcal V}ol}^{2n+1}}
\def \exp{\textsl{exp\,}}
\def \Om{\Omega}
\def \R{\mathbb{R}}
\def \cji {c_{j\,i}(x)}
\def \C { C(x):=[\cji]_{j,i},\,\, {j=1,\ldots,m \,,\, i=1,\ldots,n}}
\def \GG{\mathbb{G}}
\def \gg{\mathfrak{g}}
\def \X {X=(X_{1}, \ldots, X_{m_1})}
\def \X0 {X_{1}(0)\!=\!\partial_{x_{1}}, \ldots, X_{m_1}(0)\!=\!\partial_{x_{m_1}}}
\def \HG {\mathit{H}}
\def \C0H{\mathbf{C}_{0}^{\infty}(\Om,\HG)}
\def \C00{\mathbf{C}_{0}^{\infty}(\Om)}
\def \C01{\mathbf{C}_{0}^{1}(\Om)}
\def \exp{\textsl{exp\,}}
\def\GG{\mathbb{G}}
\begin{document}

\vskip 3cm
\begin{center}
{ \LARGE \bf An integral formula on the Heisenberg group}

\vskip 1cm {\large Francescopaolo Montefalcone\footnote{F. M. has been partially supported by the Fondazione CaRiPaRo Project ``Nonlinear Partial Differential Equations: models, analysis, and control-theoretic problems".} }

\end{center}

\markboth{Francescopaolo Montefalcone}{An integral formula on the
Heisenberg group}

\section*{Abstract} \small
Let $\mathbb H^n$ denote the $(2n+1)$-dimensional (sub-Riemannian) Heisenberg group. In this note, we shall prove  an integral identity (see Theorem \ref{RELL}) which generalizes a formula obtained in the Seventies  by Reilly, \cite{Reilly}. Some first applications will be given in Section \ref{SECTION4}.
\\{\noindent \scriptsize \sc Key words and phrases:}
{\scriptsize{\textsf {Heisenberg groups; Sub-Riemannian geometry;
hypersurfaces; Reilly's Formula.}}}\\{\scriptsize\sc{\noindent Mathematics Subject
Classification:}}\,{\scriptsize \,58C99, 26B20, 43A80.}

\normalsize

\tableofcontents

\section{ Introduction and statement of the main result}\label{basicsHYP}

In the last years, the sub-Riemannian geometry of Carnot groups has become a rich research field in both Analysis and Geometric Measure Theory; see, for
instance, \cite{balogh}, \cite{CDPT}, \cite{vari}, \cite{DanGarN8,
gar}, \cite{FSSC3} ,  \cite{Monti}, \cite{Mag},
\cite{Monteb}, \cite{RR}, but of course the  list is far from being complete or exhaustive. General overviews of
sub-Riemannian (or Carnot-Charath\'eodory)  geometries are Gromov, \cite{Gr1},
  and Montgomery, \cite{Montgomery}.

In this paper, our ambient space is the so-called {\it Heisenberg
group} $\mathbb{H}^n,\,n\geq 1,$ which can be regarded as $\mathbb{C}^n\times \R$ endowed with a
polynomial group law $\star:\mathbb{H}^n\times \mathbb{H}^n\longrightarrow\mathbb{H}^n$. Its Lie algebra $\mathfrak{h}_n$
identifies with the tangent space  $\TT_0\mathbb{H}^n$ at the
identity $0\in\mathbb{H}^n$.  Later on,  $(z, t)\in\R^{2n+1}$ will denote exponential coordinates of a
generic point $p\in\mathbb H^n$. Take now a left-invariant frame
$\mathcal{F}=\{X_1, Y_1,...,X_n, Y_n, T\}$ for the tangent bundle $\TT\mathbb{H}^n$,
where $X_i(p):=\frac{\partial}{\partial x_i} -
\frac{y_i}{2}\frac{\partial}{\partial t}$,
$Y_i(p):=\frac{\partial}{\partial y_i} +
\frac{x_i}{2}\frac{\partial}{\partial t}$  and
$T(p):=\frac{\partial}{\partial t}$. Denoting by $[\cdot, \cdot]$ the
 Lie bracket of vector fields, one has
$[X_i,Y_i]=T$ for every $i=1,...,n$ and all other commutators
vanish. Hence, $T$ is the {\it center} of $\mathfrak{h}_n$ and
 $\mathfrak{h}_n$ turns out to be nilpotent and stratified of step 2, i.e.
$\mathfrak{h}_n=\HH\oplus \HH_2$ where $\HH:={\rm span}_{\R}\{X_1,
Y_1,...,X_i,Y_i,...,X_n,Y_n\}\subset\TT\mathbb H^n$ is the  {\it horizontal bundle} and
$\HH_2={\rm span}_{\R}\{T\}$ is the $1$-dimensional (vertical)
subbundle of $\TT\mathbb H^n$ associated with the center of $\mathfrak{h}_n$. From now
on, $\mathbb{H}^n$ will be endowed with the (left-invariant)
Riemannian metric $h:=\left\langle\cdot, \cdot\right\rangle$ which makes
$\mathcal{F}$ an orthonormal frame.
\begin{oss}Hereafter, the pair $(\mathbb H^n, h)$ will be thought of as a Riemannian manifold. By  duality  w.r.t. the metric $h$,  we define a basis of left-invariant 1-forms  for the cotangent bundle $\TT^\ast\mathbb H^n$. Therefore, we have $X^\ast_1=dx_1,\,
dy_1=Y_1,...,X^\ast_i=dx_i, Y^\ast_i=dy_i,...,X^\ast_n=dx_n,Y^\ast_n=dy_n$. Furthermore, one has $\theta:=T^\ast=dt +
\frac{1}{2}\sum_{i=1}^n\left(y_i d x_i  -  x_i d y_i\right)$, which is the {\rm contact form} of
$\mathbb{H}^n$.  The {\rm Riemannian left-invariant volume form}
$\sigma^{2n+1}\rr\in\bigwedge^{2n+1}(\TT^\ast\mathbb{H}^{n})$ is
defined by $\sigma^{2n+1}\rr:=\left(\bigwedge_{i=1}^n dx_i \wedge d
y_i\right) \wedge \theta$ and the measure, obtained by integration of
$\sigma^{2n+1}\rr$, turns out to be the {\rm Haar measure} of $\mathbb{H}^{n}$.\end{oss}

The metric $h=\left\langle\cdot, \cdot\right\rangle$
induces a corresponding metric $h\cc$ on $\HH$, which is used  to measure the length of horizontal curves. The
natural distance in sub-Riemannian geometry is the  {\it
Carnot-Carath\'eodory  distance}  $\dc$, defined  by minimizing
the (Riemannian) length of all (piecewise smooth) horizontal curves
joining two different points. This definition makes sense because,
in view of Chow's Theorem, different points can always be joined by
 horizontal curves.

The stratification of  $\mathfrak{h}_n$ is related with the
existence of a 1-parameter group of automorphisms, called {\it
Heisenberg dilations}, defined by $\delta_s (z, t):=(s z, s^2 t)$,
for every $p\equiv(z, t)\in\R^{2n+1}$. The intrinsic dilations play an important role in
this geometry. In this regard, we stress that the integer $Q=2n+2$, which represents the \textquotedblleft homogeneous dimension\textquotedblright\, of $\mathbb H^n$ (w.r.t. Heisenberg dilations), turns out to be the  dimension of $\mathbb{H}^n$ as a
metric space w.r.t. the {\it CC-distance} $\dc$.

Another key notion is that of \it $\HH$-perimeter,\, \rm but since we are dealing with smooth boundaries, we do not adopt the usual variational definition. So let $S\subset\mathbb{H}^n$ be a smooth hypersurface and let $\nu$ the (Riemannian) unit normal along $S$. The {\it $\HH$-perimeter measure} $\perh$ is the $(Q-1)$-homogeneous measure, w.r.t. Heisenberg dilations, given by
$\perh\res S:=|\PH\nu|\,\sigma^{2n}\rr$, where $\PH:\TG\longrightarrow\HH$ is the orthogonal projection operator onto $\HH$ and $\sigma^{2n}\rr$ is the Riemannian measure on  $S$. We recall that the unit $\HH$-normal along $S$ is the normalized projection onto $\HH$ of the  (Riemannian) unit normal $\nu$, i.e. $\nn:=\frac{\PH\nu}{|\PH\nu|}$ and that the so-called \it characteristic set $C_S$ \rm of $S$ is the zero set of the function $|\PH\nu|$; see Section \ref{sez22}.
 The $\HH$-perimeter $\perh$ is in fact the natural measure on hypersurfaces and it turns out to be equivalent, up to a density function called metric factor (see, for instance, \cite{Mag}), to the spherical $(Q-1)$-dimensional Hausdorff measure associated with $\dc$ (or to any other homogeneous distance).

Below we shall prove a  general integral
identity, which generalizes to the sub-Riemannian setting of the
Heisenberg group $\mathbb{H}^n$ a well-known
  formula, proved  by
 Reilly (see \cite{Reilly}) in his  work concerning  Aleksandrov'
 Theorem; for a very nice presentation of the original result we refer the reader to \cite{LI}.

\begin{teo}[Main result]\label{RELL}Let $D\subset\mathbb{H}^n$ and let $S=\partial D$ be a
$\cont^2$-smooth compact (closed)
 hypersurface. Let $\phi:D\longrightarrow\R$ be a smooth solution to  $\left\{\begin{array}{ll}\Delta\cc\phi=\psi \,\,\mbox{on}\,\,D\\\,\,\,\,\,\,\,\phi=\varphi\,\,\,\mbox{on}\,\,S\end{array}\right.
$.\\ Then
\begin{eqnarray*}&& \int_D \left\{\psi^2-\|{\rm
Hess}\cc\phi\|^2\ngr+2\left\langle \dg \left(T\phi\right)
,(\dg\phi)\op\right\rangle\right\}\sigma^{2n+1}\rr\\&=&
\int_S\left(2\frac{\partial\varphi}{\partial\nn}\left(\lh\varphi-\frac{\varpi}{2}\frac{\partial\varphi}{\partial\nn\op}\right)-\MS
\left(\frac{\partial\varphi}{\partial\nn}\right)^2-
S\cc\left(\qq\varphi,\qq\varphi\right) \right\}\perh,\end{eqnarray*}.
\end{teo}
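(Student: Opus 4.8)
The plan is to transpose the classical Riemannian proof of Reilly's identity to $\mathbb{H}^n$, replacing the Levi--Civita data by their horizontal counterparts and tracking carefully the torsion created by the bracket relations $[X_i,Y_i]=T$. The engine is a \emph{horizontal Bochner (Weitzenb\"ock) identity}. Since $T$ is central and $[X_i,Y_i]=T$, a short commutator computation gives $[\Delta\cc,X_i]\phi=-2\,Y_i(T\phi)$ and $[\Delta\cc,Y_i]\phi=2\,X_i(T\phi)$; inserting these into $\frac12\Delta\cc|\dg\phi|^{2}$ and gathering the purely second-order terms into the Gram (Hilbert--Schmidt) norm of the horizontal Hessian, I would arrive at
\[
\tfrac12\,\Delta\cc\,|\dg\phi|^{2}=\|{\rm Hess}\cc\phi\|^{2}\ngr+\langle\dg\phi,\dg\psi\rangle-2\,\langle\dg(T\phi),(\dg\phi)\op\rangle .
\]
Because the antisymmetric part of ${\rm Hess}\cc\phi$ is itself governed by $T\phi$, the last term is exactly the torsion contribution that, in this sub-Riemannian setting, plays the role of the Ricci term in the Euclidean formula.

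Next I would arrange the two ``divergence'' pieces for the divergence theorem. From $\Delta\cc\phi=\psi$ one has $\langle\dg\phi,\dg\psi\rangle=\divh(\psi\,\dg\phi)-\psi^{2}$, whereas $\frac12\Delta\cc|\dg\phi|^{2}=\frac12\,\divh\big(\dg|\dg\phi|^{2}\big)$. Integrating the Bochner identity over $D$ against $\sigma^{2n+1}\rr$ and invoking the horizontal divergence theorem in the form $\int_D \divh V\,\sigma^{2n+1}\rr=\int_S\langle V,\nn\rangle\,\perh$ (valid for horizontal $V$, since $\perh=|\PH\nu|\,\sigma^{2n}\rr$), every interior term except $\psi^{2}$, $\|{\rm Hess}\cc\phi\|^{2}\ngr$ and the torsion term is pushed to the boundary, leaving
\[
\int_D\Big\{\psi^{2}-\|{\rm Hess}\cc\phi\|^{2}\ngr+2\langle\dg(T\phi),(\dg\phi)\op\rangle\Big\}\sigma^{2n+1}\rr=\int_S\Big\{\psi\,\tfrac{\partial\phi}{\partial\nn}-\tfrac12\,\tfrac{\partial}{\partial\nn}|\dg\phi|^{2}\Big\}\perh .
\]
The left-hand side already coincides with that of the theorem, so the remaining task is to recast the boundary integrand.

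On $S$ I would split the horizontal gradient into its part tangent to $S$ and its $\nn$-component, $\dg\phi=\qq\phi+\frac{\partial\phi}{\partial\nn}\,\nn$, so that $\frac12\frac{\partial}{\partial\nn}|\dg\phi|^{2}={\rm Hess}\cc\phi(\nn,\qq\phi)+\frac{\partial\phi}{\partial\nn}\,{\rm Hess}\cc\phi(\nn,\nn)$. A Gauss-type decomposition of ${\rm Hess}\cc\phi$ restricted to $\HS$ reproduces the intrinsic tangential operator $\lh$ together with the $\HH$-mean curvature $\MS$ (with the orientation conventions fixed earlier), yielding $\psi={\rm Hess}\cc\phi(\nn,\nn)+\lh\phi+\MS\,\frac{\partial\phi}{\partial\nn}$. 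Substituting, the ${\rm Hess}\cc\phi(\nn,\nn)$ contributions cancel and, using $\phi|_S=\varphi$ and $\qq\phi=\qq\varphi$, the boundary integrand reduces to $\frac{\partial\varphi}{\partial\nn}\,\lh\varphi+\MS\big(\frac{\partial\varphi}{\partial\nn}\big)^{2}-{\rm Hess}\cc\phi(\nn,\qq\varphi)$.

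The crux is the surviving term $\int_S{\rm Hess}\cc\phi(\nn,\qq\varphi)\,\perh$, which I would integrate by parts along the \emph{closed} hypersurface $S$. In the Riemannian case, symmetry of the Hessian turns this into a second copy of the $\lh\varphi$ term plus the second fundamental form $S\cc(\qq\varphi,\qq\varphi)$. Here ${\rm Hess}\cc\phi$ is \emph{not} symmetric: swapping $\nn$ with the tangential argument costs a bracket term proportional to $T\phi$, and since the horizontal direction transverse to $\HS$ that pairs with $\nn$ through the contact structure is $\nn\op$, this correction condenses into $-\frac{\varpi}{2}\,\frac{\partial\varphi}{\partial\nn}\,\frac{\partial\varphi}{\partial\nn\op}$, with $\varpi$ the structural coefficient of the characteristic direction. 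Assembling the pieces, and reconciling the sign conventions for $\MS$ and $S\cc$ fixed earlier, gives the stated right-hand side. The main obstacle is precisely this boundary integration by parts: one must manage the antisymmetric (torsion) part of the horizontal Hessian so that all commutator corrections collapse into the single $\varpi$-term, and one must check that the characteristic set $C_S$---where $\perh$ degenerates and $\nn$ is undefined---contributes nothing, so that the tangential integration by parts on $S$ is justified.
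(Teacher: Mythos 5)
Your skeleton is exactly the paper's: the paper derives precisely your horizontal Bochner identity (its \eqref{derossi}, obtained from the same commutators $[\Delta\cc, X_j]\phi$), integrates it over $D$, eliminates $\left\langle\dg\psi,\dg\phi\right\rangle$ by the divergence theorem, and is left with the boundary term $\int_S\bigl(\psi\frac{\partial\varphi}{\partial\nn}-\frac12\frac{\partial}{\partial\nn}|\dg\phi|^2\bigr)\perh$ — all of which you reproduce correctly. The gap is in the boundary reduction, at the very step you call the crux. First, your trace identity is misstated: by Lemma \ref{ljjjkl} it reads
\begin{equation*}
\psi=\left\langle{\rm Hess}\cc\phi\,\nn,\nn\right\rangle+\Delta\ss\varphi-\MS\frac{\partial\varphi}{\partial\nn},
\qquad\mbox{with}\quad \Delta\ss\varphi=\lh\varphi+\varpi\frac{\partial\varphi}{\partial\nn\op}
\end{equation*}
by Definition \ref{Deflh}. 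Writing $\lh\varphi$ there (and $+\MS$) silently discards one of the two $\varpi$-contributions: with your bookkeeping the assembled boundary integrand carries net coefficient $-\frac{\varpi}{2}$ on $\frac{\partial\varphi}{\partial\nn}\frac{\partial\varphi}{\partial\nn\op}$, whereas the statement requires $2\cdot\bigl(-\frac{\varpi}{2}\bigr)$. No reconciliation of orientation conventions can repair a coefficient off by $\frac{\varpi}{2}$: exactly half of the $\varpi$-budget must come from the trace identity ($\Delta\ss$ versus $\lh$), the other half from the tangential integration by parts.

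Second, and more seriously, the key assertion — that swapping the arguments of the non-symmetric Hessian ``condenses into $-\frac{\varpi}{2}\frac{\partial\varphi}{\partial\nn}\frac{\partial\varphi}{\partial\nn\op}$'' — is stated, not proved, and as phrased it cannot be the whole story. Since ${\rm Hess}\cc^{\rm skew}\phi=-\frac{T\phi}{2}C\cc^{2n+1}$, the swap costs exactly $T\phi\,\langle\nn\op,\qq\varphi\rangle$, and $T\phi$ is \emph{not} boundary data of the Dirichlet problem: along $S$ one has $T\phi=(T-\varpi\nn)\varphi+\varpi\frac{\partial\varphi}{\partial\nn}$, where $T-\varpi\nn$ is tangent to $S$ (indeed $\langle T-\varpi\nn,\nu\rangle=\nu_T-\varpi|\PH\nu|=0$). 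Your sketch therefore leaves the residual tangential term $\bigl((T-\varpi\nn)\varphi\bigr)\frac{\partial\varphi}{\partial\nn\op}$ unaccounted for, and it is not obviously zero. The paper organizes this step so that the question never arises in that form: the mixed term $A$ is computed in the adapted frame of Lemma \ref{Sple} through the connection $\gc$, producing $B\cc(\qq\varphi,\qq\varphi)$ — whence $S\cc(\qq\varphi,\qq\varphi)$, since the skew part $A\cc$ annihilates the quadratic form, a reduction you also invoke without justification — together with a $\gc_{\nn}\nn$-term that cancels against \eqref{fterm}; the $\varpi$-correction is then generated by the zeroth-order part $-\varpi\langle\nn\op,\cdot\rangle$ of the operator $\lg$ in the integration by parts of Theorem \ref{GD2}, \emph{not} by the skew Hessian. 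Any proof along your lines must either show that the $T\phi$-residue cancels or reroute the integration by parts through $\lg$ as the paper does. Finally, you are right to flag $C_S$, but flagging is not resolving: a compact closed $S$ has $C_S\neq\emptyset$ whenever $\chi(S)\neq 0$, so the tangential integration by parts genuinely needs the admissibility framework of Definition \ref{adm} and Theorem \ref{GD2}, which is the paper's answer to the degeneration of $\perh$ and of $\nn$ at characteristic points.
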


We stress that:\begin{itemize}\item ${\rm
Hess}\cc$  is the horizontal Hessian operator;
\item  the symbol $X\op$ (whenever $X\in\HH$) denotes a linear skew-symmetric map. More precisely, it is defined by setting $X\op:=-C\cc^{2n+1}X$, where $C\cc^{2n+1}\in\mathcal M_{2n\times 2n}(\R)$ is given by formula \eqref{formula1}; see below.
\item  $\dg$ and $\qq$ denote the horizontal gradient and the horizontal tangent gradient, resp.;
\item $\lh$ denotes a 2nd order horizontal tangential  operator, which plays the role of the classical Laplace-Beltrami operator in Riemannian geometry;
\item $\varpi:=\frac{\nu_T}{|\PH\nu|}$, where $\nu_T=\langle\nu, T\rangle$;
\item $\MS$ is the horizontal mean curvature of $S$;
\item $S\cc$ is the symmetric part of the horizontal 2nd fundamental form of $S$.\end{itemize}

 In Section \ref{SECTION4} we shall prove some  direct applications of our main result.

 Another consequence will be discussed in Section \ref{SECTION4.1}. More precisely, we shall obtain the following formula:
$$\int_{-\varepsilon}^{\varepsilon}ds   \int_{S_t}\left(\MST^2-\left\|S\cc^t\right\|^2\ngr+   \frac{3n-1}{2}(\varpi^t)^2\right)\perh =
-\int_{S^+\cup S^-}\MS\,\perh,$$where $\MST,\,S\cc^t$ and
$\varpi^t$ denote, respectively, the horizontal mean curvature,
the symmetric part of the horizontal 2nd fundamental form, and the
(weighted) vertical part of the normal $\nu^t$ of the hypersurface
$S_t=\{x\in\mathbb H^n: f_t(x)=f(x, t)=0\,\,\forall\,
t\in]-\varepsilon, \varepsilon[\}$. More precisely, we are
assuming that there is a foliation of a (small) spatial
neighborhood of the (compact, closed hypersurface) $S:=S_0$ by
means of level sets of a smooth function  $f:\mathbb
H^n\times]-\varepsilon, \varepsilon[\longrightarrow\R$  (say of class
$\cont^3$) such that:
\begin{itemize}
 \item $| \grad\,f_t|\neq 0$ along $S_t$ for every $t\in]-\varepsilon, \varepsilon[$,
\item $|\dg f_t|=1$ at each non characteristic (abbreviated NC) point of $S_t$;
\end{itemize}see,  Corollary \ref{Corollary4.9}.

As a final remark, we have to mention that, unfortunately,  the original arguments of Reilly (or those in Li' survey \cite{LI}) cannot be  adapted to our context and, above all, it seems to be still a difficult   problem to prove a  generalized version of Aleksandrov'
 Theorem in $\mathbb H^n$ for $n>1$; see \cite{RR} for the case $n=1$. 

\section{Preliminaries}

\subsection{Heisenberg group $\mathbb{H}^n$}\label{hngeo}

The {\it Heisenberg group} $(\mathbb{H}^n,\star)$, $n\geq
1$, is a connected, simply connected, nilpotent and stratified Lie
group of step 2 on $\R^{2n+1}$, w.r.t. a polynomial group
law $\star$; see below. The {\it Lie algebra} $\mathfrak{h}_n$ of
$\mathbb{H}^n$ is a $(2n+1)$-dimensional real vector space
henceforth identified with the tangent space  $\TT_0\mathbb{H}^n$ at
the identity $0\in\mathbb{H}^n$. We adopt {\it exponential
coordinates of the 1st kind} in such a way that every point
$p\in\mathbb{H}^n$  can be written out as
$p=\exp(x_1,y_1,...,x_i,y_i,...,x_n,y_n, t)$. The Lie algebra
$\mathfrak{h}_n$ can  be described by means of a frame
${\mathcal{F}}:=\{X_1,Y_1,...,X_i,Y_i,...,X_n,Y_n,T\}$
of left-invariant vector fields for $\TT\mathbb{H}^n$, where
$X_i(p):=\frac{\partial}{\partial x_i} -
\frac{y_i}{2}\frac{\partial}{\partial t},\,
Y_i(p):=\frac{\partial}{\partial y_i} +
\frac{x_i}{2}\frac{\partial}{\partial t},\,\, i=1,...,n,\,
T(p):=\frac{\partial}{\partial t},$ for every $p\in\mathbb{H}^n$.
More precisely, if $[\cdot, \cdot]$ denote Lie brackets, then the only non trivial commuting relations are $[X_i,Y_i]=T$ {for every} $i=1,...,n$. In other words, $T$ is the {\it center} of
$\mathfrak{h}_n$ and
 $\mathfrak{h}_n$ turns out to be a nilpotent and stratified Lie algebra of
step 2, i.e. $\mathfrak{h}_n=\HH\oplus \HH_2$. The first layer $\HH$
is called {\it horizontal} whereas the complementary layer $\HH_2={\rm span}_{\R}\{T\}$
is called {\it vertical}.  A horizontal left-invariant frame for
$\HH$ is given by ${\mathcal{F}}\cc=\{X_1,
Y_1,...,X_i,Y_i,...,X_n,Y_n\}.$ The group law $\star$ on
$\mathbb{H}^n$ is determined by a corresponding operation
$\diamond$ on  $\mathfrak{h}_n$, i.e. $\exp X\star\exp
Y=\exp(X\diamond Y)$ for every $X,\,Y \in \mathfrak{h}_n,$ where
$\diamond:\mathfrak{h}_n\times \mathfrak{h}_n\longrightarrow
\mathfrak{h}_n$ is defined by $X\diamond Y= X + Y+
\frac{1}{2}[X,Y]$. Thus, for every
 $p=\exp(x_1,y_1,...,x_n,y_n,
t),\,\,p'=\exp(x'_1,y'_1,...,x'_n,y'_n, t')\in \mathbb{H}^n$ we
have
\[p\star p':= \exp\left(x_1+x_1', y_1+y_1',...,x_n+x_n', y_n+y_n', t+t'+ \frac{1}{2}\sum_{i=1}^n
\left(x_i y'_{i}- x'_{i} y_i\right)\right).\] The
inverse of any ${p}\in\mathbb{H}^n$  is given by
${p}^{-1}:=\exp(-{x}_1,-y_1...,-{x}_{n}, -y_n, -t)$ and
$0=\exp(0_{\R^{2n+1}})$. Later on, we shall set $z:=(x_1, y_1,..., x_n, y_n)\in\R^{2n}$ and identify each point $p\in\mathbb H^n$ with its exponential coordinates $(z, t)\in\R^{2n+1}$.

\begin{Defi}\label{dccar} We call {\rm sub-Riemannian metric} $\g$ any
symmetric positive bilinear form on  $\HH$.
The {\rm {CC}-distance} $\dc(p,p')$ between $p, p'\in
\mathbb{H}^n$ is defined by
$$\dc(p, p'):=\inf \int\sqrt{\g(\dot{\gamma},\dot{\gamma})} dt,$$
where the $\inf$ is taken over all piecewise-smooth horizontal
curves $\gamma$ joining $p$ to $p'$. We shall equip
$\TT\mathbb{H}^n$ with the left-invariant Riemannian metric
  $h:=\left\langle\cdot,\cdot\right\rangle$ making ${\mathcal{F}}$ an
orthonormal -abbreviated o.n.- frame and assume $\g:=h|_\HH.$
\end{Defi}

By Chow's Theorem it turns out that every couple of points can
be connected by a horizontal curve, not necessarily unique, and for this reason $\dc$ turns out to be a true metric on $\mathbb{H}^n$ whose topology is equivalent to the standard (Euclidean)
topology of $\R^{2n+1}$; see \cite{Gr1}, \cite{Montgomery}. The
so-called {\it structural constants} (see \cite{Helgason} or \cite{Monte, Monteb}) of $\mathfrak{h}_n$ are
described by the skew-symmetric $(2n\times 2n)$-matrix
\begin{equation}\label{formula1}
C\cc^{2n+1}:=\left[
\begin{array}{ccccc}
  0 & 1 &   \cdot &  0 &  0 \\
  -1 & 0 &    \cdot &  0 &  0 \\
    \cdot &   \cdot & \cdot & \cdot & \cdot \\
  0 & 0 &  \cdot & 0 & 1 \\
  0 & 0 &   \cdot & -1 & 0
\end{array}%
\right].
\end{equation}This  matrix is associated
with the real valued skew-symmetric bilinear map
$\Gamma\cc:\HH\times\HH\longrightarrow \R$ given by
$\Gamma\cc(X, Y)=\left\langle[X, Y], T\right\rangle$.

\begin{no}We shall set \begin{itemize}
 \item $z^\perp:=-C^{2n+1}\cc z=(-y_1, x_1, ...,-y_n,
x_n)\in\R^{2n}\quad\forall\,\,z\in\R^{2n}$;\item  $X^\perp:=-C^{2n+1}\cc X\quad\forall\,\,X\in\HH$.
\end{itemize}
\end{no}
Given  $p\in\mathbb{H}^n$, we shall denote by
$L_p:\mathbb{H}^n\longrightarrow\mathbb{H}^n$ the {\it left
translation by $p$}, i.e. $L_pp'=p\star p'$, for every
$p'\in\mathbb{H}^n$. The map $L_p$ is a group homomorphism and its
differential
${L_p}_\ast:\TT_0\mathbb{H}^n\longrightarrow\TT_p\mathbb{H}^n$,
 ${L_p}_\ast=\frac{\partial (p\star p')}{\partial p'}\left|_{p'=0}\right.$, is given by
${L_p}_\ast={\rm{col}}[X_1(p),Y_1(p),...X_n(p),Y_n(p),T(p)]$.\\
\indent There exists a 1-parameter group of automorphisms
$\delta_s:\mathbb{H}^n \longrightarrow\mathbb{H}^n\,(s\geq 0)$,
called {\it Heisenberg dilations}, defined by $\delta_s p
:=\exp\left(s z, s^2 t\right)$ for every $s\geq 0$, where $p=\exp(z,
t)\in\mathbb{H}^n$. We recall that the {\it homogeneous dimension}
of $\mathbb{H}^n$ is the integer $Q:=2n+2$. By a well-known result of Mitchell (see, for instance, \cite{Montgomery}), this number coincides with
the {\it Hausdorff dimension} of $\mathbb{H}^n$ as metric space
w.r.t. the CC-distance $\dc$; see \cite{Gr1}, \cite{Montgomery}.\\
\indent We shall denote by $\nabla$ the unique {\it left-invariant
Levi-Civita connection} on $\TT\mathbb{H}^n$ associated with the metric
$h=\left\langle\cdot,\cdot\right\rangle$. We observe that, for every $X, Y, Z\in
\XX:=\cin(\mathbb{H}^n, \TT\mathbb{H}^n)$ one has
\[\left\langle\nabla_XY,Z\right\rangle=\frac{1}{2}
\left(\left\langle[X, Y], Z\right\rangle-\left\langle[Y, Z], X\right\rangle + \left\langle[Z,
X], Y\right\rangle\right).\]\indent For every $X,
Y\in\XX\cc:=\cin(\mathbb{H}^n,\HH)$, we shall set $\gc_X
Y:=\PH\left(\nabla_X Y\right),$ where  $\PH$ denotes  orthogonal
projection  onto $\HH$. The operation $\gc$ is a vector-bundle connection later called
{\it $\HH$-connection}; see \cite{Monteb} and references therein.
It is not difficult to see that $\gc$ is {\it flat}, {\it
compatible with the sub-Riemannian metric} $h\cc$ and {\it
torsion-free}. These properties follow from the very
definition of $\gc$ and from the corresponding properties of the
Levi-Civita connection $\nabla$.

\begin{Defi}
For any $\psi\in\cin(\mathbb{H}^n)$, the {\rm $\HH$-gradient of
$\psi$} is the horizontal vector field $\dg \psi\in\XX\cc$
such that $\left\langle\dg \psi,X \right\rangle= d \psi (X) = X \psi$ for
every $X\in \XH$. The {\rm $\HH$-divergence} $\divh X$ of
$X\in\XX\cc$ is defined, at each point $p\in \mathbb{H}^n$, by
$\divh X(p):= \mathrm{Trace}\left(Y\longrightarrow \gc_{Y} X
\right)(p)\,\,(Y\in \HH_p).$ The {\rm $\HH$-Laplacian}
$\Delta\cc$  is the 2nd order differential operator given by
$\Delta\cc\psi := \div\cc(\dg\psi)$ for every $\psi\in
\cin(\mathbb{H}^n)$.
\end{Defi}

Having fixed a left-invariant Riemannian metric $h$ on
$\TT\mathbb{H}^n$, one  defines by duality (w.r.t. the left-invariant metric $h$) a global co-frame
${\mathcal{F}}^\ast:=\{X^\ast_1,Y^\ast_1,...,X^\ast_i,Y^\ast_i,...,X^\ast_n,Y^\ast_n,T^\ast\}$
of  left-invariant $1$-forms for the cotangent bundle
$\TT^\ast\mathbb{H}^n$, where $X_i^\ast= dx_i,\,
Y_i^\ast=dy_i\,\,(i=1,...,n)$, $\theta:=T^\ast=dt +
\frac{1}{2}\sum_{i=1}^n\left(y_i d x_i  -  x_i d y_i\right).$ The
differential $1$-form $\theta$ represents the {\it contact form} of
$\mathbb{H}^n$.   The {\it Riemannian left-invariant volume form}
$\sigma^{2n+1}\rr\in\bigwedge^{2n+1}(\TT^\ast\mathbb{H}^{n})$ is
given by $\sigma^{2n+1}\rr:=\left(\bigwedge_{i=1}^n dx_i \wedge d
y_i\right) \wedge \theta$ and the measure obtained by integrating
$\sigma^{2n+1}\rr$ is the {\it Haar measure} of $\mathbb{H}^{n}$.

\subsection{Hypersurfaces}\label{sez22}

 Let $S\subset\mathbb{H}^n$ be a
hypersurface of class $\cont^r$ $(r\geq 1)$ and let $\nu$ be the (Riemannian)
unit normal along $S$. We recall that the Riemannian measure
$\sigma^{2n}\rr\in\bigwedge^{2n}(\TT^\ast S)$ on $S$ can
be defined by {\it contraction}\footnote{Let $M$ be a Riemannian
manifold. The linear map $\LL: \Lambda^r(\TT^\ast
M)\rightarrow\Lambda^{r-1}(\TT^\ast M)$ is defined, for $X\in \TT
M$ and $\omega^r\in\Lambda^r(\TT^\ast M)$, by $(X \LL \omega^r)
(Y_1,...,Y_{r-1}):=\omega^r (X,Y_1,...,Y_{r-1})$; see, for
instance, \cite{FE}. This operation is called {\it contraction} or
{\it interior product}.} of the volume form
$\sigma^{2n+1}\rr$ with the unit normal $\nu$ along $S$, i.e.
$\sigma^{2n}\rr\res S := (\nu\LL \sigma^{2n+1}\rr)|_S$.

We say that $p\in S$ is a {\it characteristic point}  if
$\dim\,\HH_p = \dim (\HH_p \cap \TT_p S)$. The {\it characteristic
set} of $S$ is the set of all characteristic points, i.e. $
C_S:=\{x\in S : \dim\,\HH_p = \dim (\HH_p \cap \TT_p S)\}.$ It is
worth noticing that $p\in C_S$ if,
 and only if, $|\PH\nu(p)|=0$. Since $|\PH\nu(p)|$ is continuous
 along $S$, it follows that
$C_S$ is a closed subset of $S$, in the relative topology. We
stress that the
$(Q-1)$-dimensional Hausdorff measure of $C_S$ vanishes, i.e.
$\mathcal{H}_{CC}^{Q-1}(C_S)=0$; see \cite{balogh}, \cite{Mag}.
\begin{oss}\label{CSET}Let $S\subset \mathbb{H}^n$ be a $\cont^2$-smooth hypersurface.
By using {\rm Frobenius' Theorem}
 about integrable distributions, it can be shown that the topological dimension
 of $C_S$ is strictly less than $(n+1)$; see also \cite{Gr1}. For deeper results  about the size of $C_S$
  in $\mathbb{H}^n$, see \cite{balogh}, \cite{Bal3}.\end{oss}
Throughout this paper we shall make use of a (smooth) homogeneous measure on
hypersurfaces, called {\it $\HH$-perimeter measure}; see also
\cite{FSSC3}, \cite{G}, \cite{DanGarN8, gar}, \cite{Mag},
\cite{Monte, Monteb}, \cite{P1},  \cite{RR}.

\begin{Defi}[$\perh$-measure]\label{sh}
 Let $S\subset\mathbb{H}^n$ be a $\mathbf{C}^1$-smooth
non-characteristic (henceforth abbreviated as NC)
 hypersurface and let $\nu$ be the unit
normal vector along $S$. The  {\rm unit $\HH$-normal} along $S$ is defined by $\nn:
=\frac{\PH\nu}{|\PH\nu|}.$ Then, the {\rm $\HH$-perimeter form}
$\perh\in\bigwedge^{2n}(\TT^\ast S)$ is the contraction of the
volume form $\sigma^{2n+1}\rr$ of $\mathbb{H}^n$ by the
horizontal unit normal $\nn$, i.e. $\perh \res S:=\left(\nn \LL
\sigma^{2n+1}\rr\right)\big|_S.$\end{Defi} If
$C_S\neq\emptyset$ we  extend $\perh$ up to $C_S$  by
setting $\perh\res C_{S}= 0$. It turns out that $\perh \res S =
|\PH \nu |\,\sigma^{2n}\rr\, \res S$.

Moreover, at each $p\in S\setminus {C}_S$ one has $\HH_p= {\rm
span}_\R\{\nn(p)\} \oplus \mathit{H}_p S$, where  $\mathit{H}_p
S:=\HH_p\cap\TT_p S$. This allow us to define, in the obvious way, the
associated subbundles $\HS \subset \TS$ and $\nn S$  called {\it horizontal tangent bundle} and {\it horizontal
normal bundle} along $S\setminus C_S$,
respectively.  On the other hand, at each  characteristic point $p\in C_S$,  only  $\HS$ is well-defined and we have  $\HH_pS=\HH_p$ for any $p\in C_S$.
\begin{Defi}
 If $\UU\subseteq S$ is an open set, we  denote by $\cont^i\ss(\UU),\,(i=1, 2)$ the space of functions whose
$\HS$-derivatives up to the $i$-th order are continuous on $\UU$. We  denote by $\cont^i\ss(\UU, \HS),\,(i=1, 2)$ the space of functions with target in $\HS$, whose
$\HS$-derivatives up to $i$-th order are continuous on $\UU$.
\end{Defi}

 Another important geometric object  is given by
$\varpi:=\frac{\nu_{T}}{|\PH\nu|}$; see \cite{Monte, Monteb},
\cite{gar}. Although the function $\varpi$ is not defined at $C_S$, we have $\varpi\in L^1_{loc}(S,
\perh)$.

The following definitions can be found in \cite{Monteb}, for
general Carnot groups. {\it Below, unless otherwise specified, we
shall assume that $S\subset\mathbb{H}^n$ is a $\cont^2$-smooth
NC hypersurface} (i.e. non-characteristic). Let $\tsc$ be the connection on
$S$ induced from the Levi-Civita connection $\nabla$ on
$\mathbb{H}^n$. As for the horizontal connection $\gc$, we define
a \textquotedblleft partial connection\textquotedblright\, $\gs$
associated with the subbundle $\HS\subset\TT S$ by setting
$$\gs_XY:=\P\ss\left(\tsc_XY\right)$$ for every
$X,Y\in\XX^1\ss:=\cont^1(S, \HS)$, where
$\P\ss:\TT{S}\longrightarrow\HS$ denotes the orthogonal projection
operator of $\TT{S}$ onto $\HS$. Starting from the orthogonal
splitting
 $\HH=\nn S\oplus\HS$,
it can be shown that$$\gs_XY=\gc_X Y-\left\langle\gc_X
Y,\nn\right\rangle \nn\quad\mbox{for every}\,\,X,
Y\in\XX^1\ss.$$\begin{Defi}\label{curvmed}If $\psi\in \cont\ss^1(S)$, we define the
$\HS$-{\rm gradient of $\psi$} to be the horizontal tangent
vector field  $\qq\psi\in\XX^0\ss:=\cont(S, \HS)$ such that $\left\langle\qq\psi,X
\right\rangle= d \psi (X) = X \psi$ for every $X\in \HS$. If  $X\in\XX^1\ss$, the {\rm
$\HS$-divergence} $\div\ss X$ of $X$ is given, at each
point $p\in S$, by
$\div\ss X (p) := \mathrm{Trace}\left(Y\longrightarrow
\gs_Y X \right)(p)$ $(Y\in \HH_pS).$ Note that $\div\ss X \in\cont(S)$. The {\rm $\HS$-Laplacian}
$\Delta_{_{\HS}}:\cont\ss^2(S)\longrightarrow\cont(S)$ is the 2nd order differential operator given by
$\Delta\ss\psi := \div\ss(\qq\psi)$ for every $\psi\in
\cont\ss^2({S})$.
The horizontal 2nd fundamental form of $S$ is the bilinear map  ${B\cc}:
\XX^1\ss\times\XX^1\ss \longrightarrow C(S)$ defined by
\[B\cc(X,Y):=\left\langle\gc_X Y,\nn\right\rangle\qquad \mbox{for every}\,\,
X,\,Y\in\XX^1\ss.\]The {\rm horizontal mean curvature} is  the trace
of $B\cc$, i.e. $\MS:=\mathrm{Tr}{B\cc}$.
\end{Defi}

Unless $n=1$,  $B\cc$ is  {\it not symmetric}; see \cite{Monteb}.
Therefore, it is convenient to represent $B\cc$ as a
sum of two operators, one symmetric and the other skew-symmetric,
i.e. $B\cc= S\cc + A\cc$. It turns out
that
$A\cc=\frac{1}{2}\varpi\,C^{2n+1}\cc\big|_{\HS}$; see
 \cite{Monteb}. The linear operator
$C^{2n+1}\cc$ only  acts on horizontal tangent vectors and hence  we shall set
$C^{2n+1}\ss:=C^{2n+1}\cc\left|_{\HS}\right.$. We have  the identity $\|A\cc\|^2\ngr
 =\frac{n-1}{2}\,\varpi^2$; see \cite{Monteb}, Example 4.11, p.
 470. This  can  be proved by means of
 an adapted o.n. frame  ${\mathcal F}$ along $S$.
 Furthermore, we observe that  $\nn^\perp\in {\rm Ker} A\cc$, where
$\nn^\perp=-C^{2n+1}\cc\nn$.

\begin{Defi}\label{movadafr}Let $S\subset\mathbb{H}^n$ be a $\cont^2$-smooth NC hypersurface.
We  call {\rm adapted  frame} along $S$  any o.n. frame
${\mathcal{F}}:=\{\tau_1,...,\tau_{2n+1}\}$ for
$\TT\mathbb{H}^n$
 such that:
\[\tau_1|_S=\nn,\qquad \HH_pS=\mathrm{span}_\R\{\tau_2(p),...,\tau_{2n}(p)\}\quad\mbox{for every}\,\,\,p\in S,
\qquad\tau_{2n+1}:= T.\]Furthermore, we shall set
$I\cc:=\{1,2,3,...,2n\}$ and
 $I\ss:=\{2,3,...,2n\}$.\end{Defi}

\begin{lemma}[see \cite{Monteb}, Lemma 3.8]\label{Sple} Let $S\subset\mathbb{H}^n$  be a $\cont^2$-smooth
NC hypersurface and fix $p\in S$.
We can always choose an adapted o.n. frame
${\mathcal{F}}=\{\tau_1,...,\tau_{2n+1}\}$ along $S$
such that $\left\langle\nabla_{X}{\tau_i}, \tau_j\right\rangle=0$ at $p$ for
every $i, j\in I\ss$ and every $X\in\HH_{p}S$.
\end{lemma}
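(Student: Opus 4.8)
The plan is to exploit the fact that the horizontal tangent bundle $\HS$ together with the induced partial connection $\gs$ behaves, near a single point, like a flat Riemannian connection on a trivial vector bundle, so that one can kill all the Christoffel-type symbols $\langle\gs_X\tau_i,\tau_j\rangle$ at one point by a pointwise-orthogonal change of the tangential frame. First I would start from an arbitrary adapted o.n.\ frame $\{\tau_1,\dots,\tau_{2n+1}\}$ along $S$ as in Definition \ref{movadafr}, with $\tau_1|_S=\nn$, $\tau_{2n+1}=T$, and $\{\tau_2,\dots,\tau_{2n}\}$ spanning $\HS$. The quantities I must arrange to vanish at $p$ are $\langle\nabla_X\tau_i,\tau_j\rangle$ for $i,j\in I\ss$ and $X\in\HH_pS$; the key observation is that since $\tau_i,\tau_j\in\HS\subset\HH$, and $\P\ss$ is the orthogonal projection onto $\HS$, one has $\langle\nabla_X\tau_i,\tau_j\rangle=\langle\tsc_X\tau_i,\tau_j\rangle=\langle\gc_X\tau_i,\tau_j\rangle=\langle\gs_X\tau_i,\tau_j\rangle$ for such $X,i,j$. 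So it suffices to kill the connection coefficients of the \emph{partial} connection $\gs$ restricted to $\HH_pS$, which is exactly a flat, metric, torsion-free connection on the rank-$(2n-1)$ bundle $\HS$.

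The main step is then a normal-frame construction for $\gs$. I would replace $\{\tau_2,\dots,\tau_{2n}\}$ by $\tilde\tau_i=\sum_{k} a_i^k\,\tau_k$, where $A(q)=[a_i^k(q)]\in O(2n-1)$ is an orthogonal matrix-valued function along $S$ with $A(p)=\mathrm{Id}$, chosen so that the first-order terms cancel. Writing the old coefficients as $\omega_{ij}(X):=\langle\gs_X\tau_i,\tau_j\rangle$ (a skew-symmetric matrix of $1$-forms on $\HS$, by compatibility of $\gs$ with $\g$), the transformation law for a metric connection gives $\tilde\omega=A^{-1}\omega A + A^{-1}\,\gs A$. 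Since $\omega(X)$ is skew-symmetric in $(i,j)$, I can prescribe the tangential derivatives of $A$ at $p$ in the skew-symmetric directions so that $\tilde\omega_{ij}(X)=0$ at $p$ for all $X\in\HH_pS$: concretely, demand $(\gs_X A)(p)=-\omega(X)(p)$ for $X$ ranging over $\HH_pS$, which is consistent precisely because $\omega(X)(p)$ is skew-symmetric and hence lies in the Lie algebra $\mathfrak{o}(2n-1)$, the tangent space to $O(2n-1)$ at the identity. Such an $A$ can be produced, e.g., by setting $A=\exp(-\Theta)$ where $\Theta$ is a skew-symmetric-matrix-valued function vanishing to first order appropriately at $p$, or simply by a first-order Taylor prescription along coordinates adapted to $\HS$ and then re-orthonormalizing; since only the value and the first $\HS$-derivatives of $A$ at $p$ enter, and those are constrained to land in $\mathfrak{o}(2n-1)$, orthonormality at $p$ is preserved to the order needed.

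The hard part, and the point that deserves care, is checking that this prescription is self-consistent with the constraint that $\{\tilde\tau_i\}$ remain a genuine o.n.\ frame for $\HS$ along $S$ while simultaneously $\tilde\tau_1=\nn$ and $\tilde\tau_{2n+1}=T$ are left untouched: the rotation must act only within the $\HS$-block, and one must verify that the differentiation $\gs_X$ does not mix in the $\nn$ or $T$ directions when computing $\tilde\omega$. This is guaranteed by the splitting $\HH=\nn S\oplus\HS$ together with $\gs=\P\ss\circ\tsc$, which by construction keeps everything inside $\HS$; thus the transformation law above is the correct intrinsic one and no spurious terms appear. Once $\tilde\omega_{ij}(X)(p)=0$ is arranged, relabeling $\tilde\tau_i$ as $\tau_i$ yields the desired adapted frame, completing the proof. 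I would note that this is the sub-Riemannian analogue of the classical fact that any point of a Riemannian manifold admits a frame that is geodesic (parallel to first order) there, and the construction is essentially the argument given in \cite{Monteb}, Lemma 3.8.
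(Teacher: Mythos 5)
Your proposal is correct: the reduction $\left\langle\nabla_X\tau_i,\tau_j\right\rangle=\left\langle\gs_X\tau_i,\tau_j\right\rangle$ for $i,j\in I\ss$, $X\in\HH_pS$ (via the nested orthogonal projections $\HS\subset\TT S$, $\HS\subset\HH$) is sound, and the pointwise gauge rotation $A=\exp(-\Theta)$ with skew-symmetric first-order prescription $XA(p)=-\omega(X)(p)$ legitimately kills the connection coefficients at $p$ while leaving $\tau_1=\nn$ and $\tau_{2n+1}=T$ untouched. The paper itself gives no proof, deferring to \cite{Monteb}, Lemma 3.8, whose argument is essentially this same normal-frame construction, so your attempt matches the intended route (and you even need less than a full geodesic frame, since only derivatives along $\HH_pS$ must vanish).
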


\begin{lemma}\label{ljjjkl} Let $S\subset\mathbb{H}^n$ be a $\cont^2$-smooth
NC hypersurface. Then
\begin{eqnarray}\label{ljjjkltay}\Delta\ss\phi=\Delta\cc\phi +
\MS\frac{\partial\phi}{\partial\nn}-\left\langle {\rm
Hess}\cc \phi\, \nn, \nn\right\rangle\qquad\forall\,\,\phi\in\cin(\mathbb H^n).\end{eqnarray}
\end{lemma}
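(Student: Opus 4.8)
The plan is to reduce the identity to a pointwise computation at a fixed non-characteristic point, exploiting the orthogonal splitting $\HH=\nn S\oplus\HS$ and the induced decomposition of the horizontal gradient. First I would write, using $\HH=\nn S\oplus\HS$,
\[\dg\phi=\frac{\partial\phi}{\partial\nn}\,\nn+\qq\phi,\]
where $\frac{\partial\phi}{\partial\nn}=\nn\phi=\langle\dg\phi,\nn\rangle$ and $\qq\phi$ is the $\HS$-component of $\dg\phi$; this last identification holds because $\langle\dg\phi,X\rangle=X\phi=\langle\qq\phi,X\rangle$ for every $X\in\HS$ and both vectors lie in $\HS$. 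Then I would fix $p\in S$ and choose an adapted o.n. frame $\mathcal{F}=\{\tau_1,\dots,\tau_{2n+1}\}$ as in Definition \ref{movadafr}, so that $\tau_1|_S=\nn$, $\HS=\mathrm{span}_\R\{\tau_2,\dots,\tau_{2n}\}$ and $\tau_{2n+1}=T$.

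Next I would express the $\HH$-Laplacian as the trace of the horizontal Hessian. Since $\gc$ is metric, the horizontal Hessian satisfies ${\rm Hess}\cc\phi(X,Y)=\langle\gc_X(\dg\phi),Y\rangle$, so that
\[\Delta\cc\phi=\divh(\dg\phi)=\sum_{i\in I\cc}\langle\gc_{\tau_i}(\dg\phi),\tau_i\rangle=\langle{\rm Hess}\cc\phi\,\nn,\nn\rangle+\sum_{i\in I\ss}\langle\gc_{\tau_i}(\dg\phi),\tau_i\rangle,\]
where I separate the contribution of $\tau_1=\nn$ (which is exactly $\langle{\rm Hess}\cc\phi\,\nn,\nn\rangle$) from the tangential indices $I\ss=\{2,\dots,2n\}$. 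It therefore remains to identify the tangential sum.

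For the tangential sum I would insert the decomposition of $\dg\phi$ and treat the two pieces separately. For the normal piece, the Leibniz rule and $\langle\nn,\tau_i\rangle=0$ give $\langle\gc_{\tau_i}(\tfrac{\partial\phi}{\partial\nn}\nn),\tau_i\rangle=\frac{\partial\phi}{\partial\nn}\langle\gc_{\tau_i}\nn,\tau_i\rangle$; since $\tau_i\in\HS$ is tangent to $S$, metric compatibility of $\gc$ together with $\langle\nn,\tau_i\rangle\equiv0$ along $S$ yields $\langle\gc_{\tau_i}\nn,\tau_i\rangle=-\langle\gc_{\tau_i}\tau_i,\nn\rangle=-B\cc(\tau_i,\tau_i)$, so summation over $I\ss$ produces $-\MS\,\frac{\partial\phi}{\partial\nn}$. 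For the tangential piece $\qq\phi\in\HS$, the defining relation $\gs_X Y=\gc_X Y-\langle\gc_X Y,\nn\rangle\nn$ combined again with $\langle\nn,\tau_i\rangle=0$ gives $\langle\gc_{\tau_i}(\qq\phi),\tau_i\rangle=\langle\gs_{\tau_i}(\qq\phi),\tau_i\rangle$, whence $\sum_{i\in I\ss}\langle\gc_{\tau_i}(\qq\phi),\tau_i\rangle=\div\ss(\qq\phi)=\Delta\ss\phi$. Substituting these two evaluations and rearranging gives precisely the asserted identity.

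The one point requiring care — and the only genuine obstacle — is the legitimacy of differentiating $\nn$ and $\qq\phi$, which are defined only along $S$, by the ambient connection $\gc$. This is harmless because in the tangential sum one only ever differentiates in directions $\tau_i\in\HS\subset\TT S$, so each $\gc_{\tau_i}(\cdot)$ depends solely on the restriction to $S$ of the section differentiated (compute along a curve lying in $S$). Everything else is bookkeeping once the frame is fixed; note in particular that no use of the refined frame of Lemma \ref{Sple} is needed, only an adapted frame in the sense of Definition \ref{movadafr}.
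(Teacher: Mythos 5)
Your proof is correct, and it reaches the identity by a genuinely different (dual) bookkeeping from the paper's, even though both are pointwise adapted-frame computations splitting the index $1$ from $I\ss$. The paper works at second order: it writes $\Delta\cc\phi=\sum_{i\in I\cc}\bigl(\tau_i^{(2)}-\gc_{\tau_i}\tau_i\bigr)(\phi)$ (the invariant rough-Laplacian form, citing Hicks), decomposes the \emph{connection term} $\gc_{\tau_i}\tau_i$ into its $\HS$-part plus its $\nn$-component to produce $\Delta\ss\phi-\MS\,\tau_1(\phi)$, and then identifies $\tau_1^{(2)}(\phi)-(\gc_{\tau_1}\tau_1)(\phi)=\langle{\rm Hess}\cc\phi\,\tau_1,\tau_1\rangle$ by the explicit expansion $\tau_1=\sum_i A^1_iX_i$ using $\gc_{X_i}X_j=0$. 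You instead work at first order: you compute $\Delta\cc\phi=\divh(\dg\phi)$ as a trace, decompose the \emph{gradient} $\dg\phi=\frac{\partial\phi}{\partial\nn}\,\nn+\qq\phi$, and extract $-\MS\frac{\partial\phi}{\partial\nn}$ from the shape-operator relation $\langle\gc_{\tau_i}\nn,\tau_i\rangle=-B\cc(\tau_i,\tau_i)$ via metric compatibility. What your route buys is a cleaner conceptual structure (no coordinate expansion) and an explicit treatment of the only delicate point, namely that $\gc_{\tau_i}$ may be applied to $\nn$ and $\qq\phi$, defined only along $S$, because $\tau_i\in\HS\subset\TT S$; the paper's route sidesteps this issue entirely, since it only ever differentiates the ambient function $\phi$, and its price is the hands-on verification \eqref{idder}. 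One step you should make explicit: your identification ${\rm Hess}\cc\phi(X,Y)=\langle\gc_X(\dg\phi),Y\rangle$ does not follow from metric compatibility alone; since the paper's ${\rm Hess}\cc\phi$ is (implicitly) the matrix of second derivatives $[X_jX_i\phi]$ in the left-invariant frame, matching the two requires the flatness relations $\gc_{X_i}X_j=0$ --- exactly the fact the paper invokes in \eqref{idder} --- after which the non-symmetry of ${\rm Hess}\cc\phi$ is harmless because you only evaluate the quadratic form at $(\nn,\nn)$. Your closing remark is also accurate: neither proof needs the refined frame of Lemma \ref{Sple}, only an adapted frame in the sense of Definition \ref{movadafr}.
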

\begin{proof}Using an adapted frame ${\mathcal{F}}$, we compute
 \begin{eqnarray*}\Delta\cc\phi&=&\sum_{i\in I\cc}\left(\tau^{(2)}_i-\gc_{\tau_i}\tau_i\right)(\phi)
 \\&=&\tau^{(2)}_1
 (\phi)-\left(\gc_{\tau_1}\tau_1\right)(\phi)+\sum_{i\in I\ss}\left(\left(\tau^{(2)}_i-
 \gs_{\tau_i}\tau_i\right)(\phi)-\left\langle\gc_{\tau_i}\tau_i,
 \tau_1\right\rangle\tau_1(\phi)\right)\\&=&\tau^{(2)}_1
 (\phi)-\left(\gc_{\tau_1}\tau_1\right)(\phi)+ \Delta\ss\phi
 - \MS\tau_1(\phi).
 \end{eqnarray*}Note that the first   identity comes from the usual invariant definition of the Laplace
 operator on Riemannian manifolds (or vector bundles); see \cite{Hicks}.
 Now we claim that \[\tau^{(2)}_1
 (\phi)-\left(\gc_{\tau_1}\tau_1\right)(\phi)=\left\langle{\rm
Hess}\cc \phi\,\tau_1, \tau_1\right\rangle.\]Assuming $\tau_1=\sum_{i\in
I\cc}A^1_iX_i$  yields
\begin{eqnarray*}\tau^{(2)}_1(\phi)=\sum_{i\in
I\cc}\tau_1(A^1_iX_i(\phi))=\sum_{i,j\in
I\cc}\left(\tau_1(A^1_i)X_i(\phi) +
A^1_iA^1_jX_j(X_i(\phi))\right).\end{eqnarray*}Since
$\gc_{\tau_1}\tau_1=\sum_{i, j\in I\cc}\left(\tau_1(A^1_i)X_i+
A^1_iA^1_j{\gc_{X_i}X_j}\right)$ and $\gc_{X_i}X_j=0$, the claim
follows because
\begin{eqnarray}\label{idder}\tau^{(2)}_1
 (\phi)-\left(\gc_{\tau_1}\tau_1\right)(\phi)=\sum_{i,
j\in I\cc} A^1_iA^1_jX_j(X_i(\phi))=\langle {\rm Hess}\cc
\phi\,\tau_1,\tau_1\rangle.\end{eqnarray}
 \end{proof}

\begin{Defi}[Horizontal tangential operators]\label{Deflh} Let $S\subset \mathbb H^n$ be a NC hypersurface. We shall denote by $\lg:\XX^1\ss\longrightarrow\cont(S)$ be
the 1st order
differential operator given by
\begin{eqnarray*}\lg(X):=\div\ss X + \varpi\left\langle C^{2n+1}\cc\nn,
X\right\rangle=\div\ss X -\varpi\left\langle\nn^{\perp}, X\right\rangle\qquad
\mbox{for every}\,\,X\in\XX\ss^1(\HS):=\cont^1\ss(S,\HS).\end{eqnarray*}Moreover, we shall denote by $\lh:\cont\ss^2(S)\longrightarrow\cont(S)$  the 2nd order differential operator given by
\begin{eqnarray*}\lh\varphi:=\lg(\qq \varphi)=\Delta\ss\varphi -\varpi\frac{\partial\varphi}{\partial\nn^\perp}
\qquad\mbox{for every}\,\,\varphi\in\cont^2\ss(S).\end{eqnarray*}
\end{Defi}

 Note that  $\lg(\varphi X)=\varphi\lg X +
\langle\qq \varphi, X\rangle\,\,\forall\,\,X\in\XX\ss^1(\HS),\,\forall\,\,\varphi\in\cont\ss^1(S)$.  These definitions are motivated by
Theorem 3.17 in \cite{Monteb}, which was proved first for NC hypersurfaces with boundary; see \cite{Monte, Monteb}. Actually, it holds true even in case of non-empty
characteristic sets. A
 simple way to formulate this claim is based on the next:
\begin{Defi}\label{adm}
Let $X\in\cont\ss^1(S\setminus C_S, \HS)$ and set
 $\alpha_X:=(X\LL \per)|_S$. We say that $X$ is \rm admissible (for the horizontal divergence formula)  \it if the differential forms $\alpha_X$ and $d\alpha_X$ are continuous on all of $S$. We say that $\phi\in\cont^2\ss(S\setminus C_S)$  is \rm admissible \it if $\qq \phi$ is admissible for the horizontal divergence formula.
\end{Defi}
 If the differential forms $\alpha_X$ and $d\alpha_X$ are continuous on all of $S$,
then Stokes formula holds true; see, for instance, \cite{Taylor}. In particular, we
 stress that: (i) if $X\in\cont\ss^1(S, \HS)$, then $X$ is admissible; (ii) if $\phi\in\cont^2\ss(S)$, then $\phi$ is admissible. The following holds:
\begin{teo}\label{GD2}Let $S$ be a
compact  hypersurface of class $\cont^2$ without
 boundary. Then
\begin{eqnarray}\label{hparts} \int_{S}\lg X\,\per=-\int_{S}\MS\langle X, \nn\rangle\,\per \qquad \forall\,\,X\in\XX^1\cc.\end{eqnarray}\end{teo}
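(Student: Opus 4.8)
The plan is to read the identity as an instance of Stokes' theorem on the closed manifold $S$ for the $(2n-1)$-form $\alpha_X:=(X\LL\per)\big|_S$, once its exterior derivative has been identified with the left-hand integrand plus the mean-curvature term. The first, purely algebraic, observation is that $\alpha_X$ sees only the $\HS$-tangential part of $X$. Viewing $\per=(\nn\LL\sigma^{2n+1}\rr)\big|_S$ (Definition \ref{sh}) and splitting $X=\P\ss X+\langle X,\nn\rangle\,\nn$ along $S\setminus C_S$, the normal component contributes $\langle X,\nn\rangle\,\nn\LL(\nn\LL\sigma^{2n+1}\rr)=0$, because contracting $\sigma^{2n+1}\rr$ twice by $\nn$ vanishes; hence $\alpha_X=\alpha_{\P\ss X}$ and the normal part of $X$ is invisible to $\alpha_X$.

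The core step is the pointwise identity on $S\setminus C_S$,
\[ d\alpha_X=\bigl(\lg X+\MS\,\langle X,\nn\rangle\bigr)\,\per, \]
which I would establish in two pieces. For a genuinely tangential field $Y\in\cont^1\ss(S,\HS)$ the standard relation between the exterior derivative, the interior product, and a fixed top-degree form on $S$ reads $d\bigl((Y\LL\per)\big|_S\bigr)=(\lg Y)\,\per$; this is precisely what motivates the operator $\lg$ of Definition \ref{Deflh} and is the boundaryless form of Theorem 3.17 in \cite{Monteb}. Applied to $Y=\P\ss X$ and combined with $\alpha_X=\alpha_{\P\ss X}$, it gives $d\alpha_X=(\lg\,\P\ss X)\,\per$. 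It then remains to compare $\lg\,\P\ss X$ with $\lg X$, i.e. by linearity to compute $\lg(\langle X,\nn\rangle\,\nn)$. Working in an adapted orthonormal frame (Definition \ref{movadafr} and Lemma \ref{Sple}), the term $\varpi\langle C^{2n+1}\cc\nn,\nn\rangle$ vanishes by skew-symmetry of $C^{2n+1}\cc$, while $\div\ss(\langle X,\nn\rangle\,\nn)=\langle X,\nn\rangle\sum_{j\in I\ss}\langle\gc_{\tau_j}\nn,\tau_j\rangle$ since $\langle\nn,\tau_j\rangle\equiv 0$ on $S$ for $j\in I\ss$. Compatibility of $\gc$ with $\g$ then yields $\langle\gc_{\tau_j}\nn,\tau_j\rangle=-\langle\nn,\gc_{\tau_j}\tau_j\rangle=-B\cc(\tau_j,\tau_j)$, so the sum equals $-\MS$ and $\lg(\langle X,\nn\rangle\,\nn)=-\MS\,\langle X,\nn\rangle$. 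Therefore $\lg\,\P\ss X=\lg X+\MS\,\langle X,\nn\rangle$, which is the displayed identity.

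Integrating over $S$ and using that $S$ is compact without boundary, Stokes' theorem gives $\int_S d\alpha_X=0$, whence $\int_S\lg X\,\per=-\int_S\MS\,\langle X,\nn\rangle\,\per$, as claimed. The main obstacle is entirely concentrated at the characteristic set: $\nn$, $\varpi$, $\lg$ and all the forms above are defined only on $S\setminus C_S$, and $\varpi=\nu_T/|\PH\nu|$ blows up as one approaches $C_S$, so Stokes must be justified across $C_S$. This is exactly the admissibility condition of Definition \ref{adm}: one has to verify that $\alpha_X$ and $d\alpha_X$ extend continuously to all of $S$. What rescues the argument is that $X$ is ambient $\cont^1$ and the weight $|\PH\nu|$ in $\per=|\PH\nu|\,\sigma^{2n}\rr$ vanishes continuously on $C_S$, taming the singularities of $\nn$ and $\varpi$; together with $\mathcal{H}_{CC}^{Q-1}(C_S)=0$ this permits excising a shrinking neighbourhood of $C_S$, applying Stokes on the complement, and passing to the limit with no residual boundary term.
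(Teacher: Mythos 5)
Your argument is correct and follows essentially the same route the paper intends: the paper states Theorem \ref{GD2} by appeal to Theorem 3.17 of \cite{Monteb} together with the admissibility/Stokes framework of Definition \ref{adm}, and your reconstruction --- splitting $X=\P\ss X+\langle X,\nn\rangle\,\nn$, noting $\alpha_X=\alpha_{\P\ss X}$, computing $\lg\bigl(\langle X,\nn\rangle\,\nn\bigr)=-\MS\,\langle X,\nn\rangle$ in an adapted frame, and applying Stokes on the closed $S$ --- is exactly the mechanism by which the mean-curvature term arises in that reference. Your closing discussion of $C_S$ correctly identifies admissibility (continuity of $\alpha_X$ and $d\alpha_X$ across $C_S$, thanks to the vanishing weight $|\PH\nu|$) as the point the paper itself only asserts, so no genuine gap beyond what the paper likewise leaves to \cite{Monteb}.
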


Note that, if $X\in\XX^1\ss$  the first integral on the right hand side vanishes and, in this case,
the formula is referred as \textquotedblleft horizontal divergence formula\textquotedblright.\\

 Finally, we state some useful Green's formulas:
\begin{itemize} \item[{\rm(i)}]
$\int_{S}\lh \varphi\,\perh=0$ for every
$\varphi\in\cont^2\ss(S)$; \item[
{\rm(ii)}]$\int_{S}\psi\,\lh
\varphi\,\perh=-\int_{S}\left\langle\qq\varphi,\qq\psi\right\rangle\,\perh $
 for every
$\varphi,\,\psi\in\cont^2\ss(S)$;\item[{\rm(iii)}]$\int_{S}\lh
 \left(\frac{\varphi^2}{2}\right) \,\perh= \int_{S}\varphi\lh
\varphi\,\perh+ \int_{S}|\qq\varphi|^2\,\perh =0$
for every $\varphi\in\cont^2\ss(S)$.
\end{itemize}

\section{Proof of Theorem \ref{RELL}}

\begin{proof}Below we shall make use of the fixed left-invariant frame
${\mathcal{F}}=\{X_1,X_2,...,X_{2n}, X_{2n+1}\}$, where we have set $X_{2i}:=Y_i$ for every $i=1,...,n$ and $X_{2n+1}:=T$. First, we
compute
\begin{eqnarray*}\frac{1}{2}\Delta\cc|\dg\phi|^2&=&\frac{1}{2}\sum_{i,j\in I\cc}X_iX_i(X_j\phi)^2\\&=&
\sum_{i,j\in I\cc}X_i\left(X_j\phi X_iX_j\phi\right)\\&=&\sum_{i,j\in
I\cc}\left((X_iX_j\phi)^2+
X_iX_i(X_j\phi)X_j\phi\right)\\&=&\sum_{i,j\in
I\cc}\left(\phi^2_{ij} +\Delta\cc(X_j\phi)X_j\phi\right).
\end{eqnarray*}Moreover, we have
\begin{eqnarray*}\Delta\cc(X_j\phi)&=&\sum_{i\in I\cc}
X_iX_i(X_j\phi)\\&=&\sum_{i\in I\cc} X_i\left(X_jX_i\phi+
[X_i,X_j](\phi)\right)\\&=&\sum_{i\in I\cc}
X_jX_i(X_i\phi)+[X_i,X_j](X_i\phi)+X_i\left([X_i,X_j](\phi)\right)\\&=&\sum_{i\in
I\cc}X_j\Delta\cc\phi+
\SC^{2n+1}_{ij}TX_i\phi+\SC^{2n+1}_{ij}X_i(T\phi)\\&=&\sum_{i\in
I\cc}X_j\Delta\cc\phi+
\SC^{2n+1}_{ij}X_i(T\phi)+\SC^{2n+1}_{ij}X_i(T\phi)\\&=&X_j\Delta\cc\phi-2\left\langle
C\cc^{2n+1}\dg (T\phi),X_j\right\rangle.
\end{eqnarray*} From these computations, we infer the formula
\begin{eqnarray}\nonumber\frac{1}{2}\Delta\cc|\dg\phi|^2&=&\|{\rm Hess}\cc\phi\|^2\ngr+
\left\langle\dg\underbrace{(\Delta\cc\phi)}_{=\psi},\dg\phi\right\rangle-2\left\langle
C\cc^{2n+1}\dg
(T\phi),\dg\phi\right\rangle\\\label{derossi}&=&\|{\rm
Hess}\cc\phi\|^2\ngr+
\left\langle\dg\psi,\dg\phi\right\rangle+2\left\langle T( \dg
\phi),C\cc^{2n+1}\dg\phi\right\rangle.\end{eqnarray}{\it
Hereafter we will use the hypothesis $\phi|_S=\varphi$.} By applying the usual
Divergence Theorem, we have
$$\int_D\left\{\psi\Delta\cc\phi+\left\langle\dg\psi,\dg\phi\right\rangle\right\}\sigma^{2n+1}\rr=
\int_S\psi\frac{\partial\varphi}{\partial\nn}\perh$$and we
 get
that$$\int_D\left\langle\dg\psi,\dg\phi\right\rangle\sigma^{2n+1}\rr=-\int_D\psi^2\sigma^{2n+1}\rr+
\int_S\psi\frac{\partial\varphi}{\partial\nn}\perh.$$ Set now
$\chi:=\frac{|\dg\phi|^2}{2}$. By integrating \eqref{derossi}
along $D$ and using the last identity, we obtain

\begin{eqnarray}\label{aquilani}\int_D\Delta\cc\chi\sigma^{2n+1}\rr
&=&\int_D \left(\|{\rm Hess}\cc\phi\|^2\ngr-\psi^2+2\left\langle
T\dg\phi
,C\cc^{2n+1}\dg\phi\right\rangle\right)\sigma^{2n+1}\rr+\int_S\psi\frac{\partial\varphi}{\partial\nn}\perh.\end{eqnarray}
So let  $\mathcal F\cc=\{\tau_1(=\nn),
\tau_2,...,\tau_{2n}\}$ be a horizontal frame for $\HH$ adapted to $S$ and let us compute
\begin{eqnarray}\nonumber\int_D\Delta\cc\chi\sigma^{2n+1}\rr&=&\int_S\left\langle\dg\chi,\nn\right\rangle\perh=
\int_S\left\langle\dg\left(\frac{|\dg\varphi|^2}{2}\right),\nn\right\rangle\perh\\&=&\label{2iny}
\sum_{j\in
I\cc}\int_S\tau_j(\varphi)\frac{\partial\tau_j(\varphi)}{\partial\nn}\perh=
\int_S\left(\frac{\partial\varphi}{\partial\nn}\frac{\partial^2\varphi}{\partial\nn^2}+
\sum_{j\in
I\ss}\tau_j(\varphi)\frac{\partial\tau_j(\varphi)}{\partial\nn}\right)\perh.\end{eqnarray}
Using the identity $\Delta\ss\varphi=\Delta\cc\varphi +
\MS\frac{\partial\varphi}{\partial\nn}-\left\langle{\rm
Hess}\cc(\varphi)\nn,\nn\right\rangle$ (see Lemma \ref{ljjjkl}) and  the fact that
$$\left(\frac{\partial^2}{\partial\nn^2}-\gc_{\nn}\nn\right)(\varphi)=\left\langle{\rm
Hess}\cc(\varphi) \nn,\nn\right\rangle,$$yields
\begin{eqnarray}\label{fterm}\int_S\left(\frac{\partial\varphi}{\partial\nn}\frac{\partial^2\varphi}{\partial\nn^2}\right)
\perh=
\int_S\nn(\varphi)\left(\underbrace{\Delta\cc\varphi}_{=\psi}-\Delta\ss\varphi+\MS\nn(\varphi)+
\underbrace{\gc_{\nn}\nn(\varphi)}_{=\left\langle\gc_{\nn}\nn,\qq\varphi\right\rangle}\right)\perh.\end{eqnarray}The second integrand of the right-hand side of  \eqref{2iny} can be computed by means of a
 frame $\mathcal F$ satisfying Lemma \ref{Sple}
at a fixed point $p\in S$. More precisely, we have
\begin{eqnarray*}A&:=&\sum_{j\in
I\ss}\int_S\tau_j(\varphi)\left(\nn(\tau_j(\varphi))\right)\perh\\&=&\sum_{j\in
I\ss}\int_S\tau_j(\varphi)\left(\tau_j(\nn(\varphi))+\left(\gc_{\nn}\tau_j-\gc_{\tau_j}\nn\right)(\varphi)\right)\perh\\&=&
\sum_{j\in
I\ss}\int_S\tau_j(\varphi)\left\{\tau_j(\nn(\varphi))+\sum_{l\in
I\cc}\left(\underbrace{\left\langle\gc_{\nn}\tau_j,\tau_l\right\rangle}_{\neq
0\Leftrightarrow
l=1}\tau_l(\varphi)-\left\langle\gc_{\tau_j}\nn,\tau_l\right\rangle\tau_l(\varphi)\right)\right\}\perh\\&=&\sum_{j\in
I\ss}\int_S\tau_j(\varphi)\left\{\tau_j(\nn(\varphi))+\sum_{l\in
I\ss}\left(-\left\langle\gc_{\nn}\nn,\tau_j\right\rangle\,\nn(\varphi)+
B\cc(\tau_j,\tau_l)\tau_l(\varphi)\right)\right\}\perh\\&=&
\int_S\left\{\left\langle\qq\varphi,\qq\left(\nn(\varphi)\right)\right\rangle-\left\langle\gc_{\nn}\nn,\qq\varphi\right\rangle\,\nn(\varphi)+
B\cc\left(\qq\varphi,\qq\varphi\right)\right\}\perh.\end{eqnarray*}  Theorem \ref{GD2} allows us to integrate by parts the first integrand and, since
the
boundary term vanishes, we get that
$$\int_S\lg\left(\nn(\varphi)\qq\varphi\right)\perh=
\int_S\left(\div\ss\left(\nn(\varphi)\qq\varphi\right)-\varpi\left\langle\nn\op,\left(\nn(\varphi)\qq\varphi\right)\right\rangle\right)\perh=0$$
and hence$$\int_S\left\langle\qq\varphi,\qq(\nn(\varphi))\right\rangle\perh=
-\int_S\left(\nn(\varphi)\Delta\ss\varphi-\varpi\nn(\varphi)\nn\op(\varphi)\right)\perh.$$
Therefore
\begin{eqnarray}\label{vucinic}A=\int_S\left(-\nn(\varphi)\left(\Delta\ss\varphi-\varpi\nn\op(\varphi)+\left\langle\gc_{\nn}\nn,\qq\varphi\right\rangle\right)+
B\cc\left(\qq\varphi,\qq\varphi\right)\right)\perh.\end{eqnarray}  Finally, by
making use of \eqref{aquilani}, \eqref{fterm} and \eqref{vucinic} we  obtain
\begin{eqnarray*}
&&\int_S\left\{-2\nn(\varphi)\left(\Delta\ss\varphi-\frac{\varpi\nn\op(\varphi)}{2}\right)+\MS(\nn(\varphi))^2+
S\cc\left(\qq\varphi,\qq\varphi\right) \right\}\perh \\&=& \int_D \left(\|{\rm
Hess}\cc\phi\|^2\ngr-\psi^2+2\left\langle T(\dg\phi)
,C\cc^{2n+1}\dg\phi\right\rangle\right)\sigma^{2n+1}\rr,\end{eqnarray*}which
is equivalent to the thesis, once we note that
$$\lh\varphi+\frac{\varpi\nn\op(\varphi)}{2}=\Delta\ss\varphi-\frac{\varpi\nn\op(\varphi)}{2}.$$This achieves the proof.
\end{proof}
\section{Some applications}\label{SECTION4}
Let us begin with the following:
\begin{oss}Let $\phi\in\cont^2(\mathbb H^n)$. In general, the horizontal Hessian  ${\rm Hess}\cc\phi$ of $\phi$ it {\rm is not symmetric}.
However, we may consider its standard
decomposition $${\rm Hess}\cc\phi=\frac{{\rm Hess}\cc\phi+{\rm
Hess}^{\rm Tr}\cc\phi}{2}+\frac{{\rm Hess}\cc\phi-{\rm
Hess}\cc^{\rm Tr}\phi}{2}:={\rm Hess}\cc^{\rm sym}\phi+{\rm
Hess}\cc^{\rm skew}\phi$$ where ${\rm Hess}\cc^{\rm sym}\phi$
denotes the symmetric part of  ${\rm Hess}\cc\phi$ and ${\rm
Hess}\cc^{\rm skew}\phi$ denotes its skew-symmetric part. It is
not  difficult to see that $${\rm Hess}\cc^{\rm
skew}\phi=-\frac{T\phi}{2}C\cc^{2n+1}.$$ Indeed note that, by its own definition, the $i$-th row of ${\rm Hess}\cc^{\rm
skew}\phi$ is given by
$\frac{\dg(X_i\phi)-X_i(\dg\phi)}{2}$, and  one has
$[X_j,X_i](\phi)=(X_jX_i-X_iX_j)(\phi)=\SC^{2n+1}_{j\,i} T\phi$.
Therefore$$\|{\rm Hess}\cc\phi\|^2\ngr=\|{\rm Hess}^{\rm
sym}\cc\phi\|^2\ngr+ \frac{n}{2}(T\phi)^2.$$The last
identity just says that  the Gram norm of a matrix is the sum of
the Gram norm of its symmetric part with the Gram norm of its
skew-symmetric part.
Finally, it is elementary to see that $\|C^{2n+1}\cc\|^2\ngr=2n$.\end{oss}

By applying  Theorem \ref{RELL} together with
Newton's inequality we deduce an interesting inequality.
\begin{corollario}\label{c0f}Under the same hypotheses of Theorem \ref{RELL},
the following holds:
\begin{eqnarray}\label{erllo}&&\frac{2n-1}{2n}\int_D \psi^2\sigma^{2n+1}\rr\geq \int_D \left(
\frac{n}{2}(T\phi)^2-2\left\langle\dg \left(T\phi\right)
,(\dg\phi)\op\right\rangle\right)\sigma^{2n+1}\rr\\\nonumber&+&
\int_S\left(2\frac{\partial\varphi}{\partial\nn}\left(\lh\varphi-\frac{\varpi}{2}\frac{\partial\varphi}{\partial\nn\op}\right)-\MS
\left(\frac{\partial\varphi}{\partial\nn}\right)^2-
S\cc\left(\qq\varphi,\qq\varphi\right) \right)\perh,\end{eqnarray}with
equality if, and only if, ${\rm Hess}\cc^{\rm
sym}\phi=\frac{\psi}{2n}{\rm Id}\cc$.\end{corollario}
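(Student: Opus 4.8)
The plan is to leave the domain term $2\langle\dg(T\phi),(\dg\phi)\op\rangle$ and the entire boundary integral of Theorem~\ref{RELL} untouched, and to bound only the Gram norm $\|{\rm Hess}\cc\phi\|^2\ngr$ from below; the corollary then drops out as a one-line rearrangement of the exact identity supplied by Theorem~\ref{RELL}.

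First I would invoke the preceding Remark, which gives the orthogonal splitting ${\rm Hess}\cc\phi={\rm Hess}\cc^{\rm sym}\phi+{\rm Hess}\cc^{\rm skew}\phi$ with ${\rm Hess}\cc^{\rm skew}\phi=-\frac{T\phi}{2}C\cc^{2n+1}$, together with the norm identity $\|{\rm Hess}\cc\phi\|^2\ngr=\|{\rm Hess}\cc^{\rm sym}\phi\|^2\ngr+\frac{n}{2}(T\phi)^2$. Since $C\cc^{2n+1}$ is skew-symmetric and therefore trace-free, the trace of ${\rm Hess}\cc^{\rm sym}\phi$ equals that of the full horizontal Hessian, namely ${\rm Tr}\,{\rm Hess}\cc^{\rm sym}\phi=\Delta\cc\phi=\psi$.

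The analytic core is Newton's inequality applied to the symmetric $2n\times 2n$ matrix ${\rm Hess}\cc^{\rm sym}\phi$ (equivalently, Cauchy--Schwarz between its eigenvalue vector and $(1,\dots,1)$): one has $({\rm Tr}\,{\rm Hess}\cc^{\rm sym}\phi)^2\le 2n\,\|{\rm Hess}\cc^{\rm sym}\phi\|^2\ngr$, i.e. $\psi^2\le 2n\,\|{\rm Hess}\cc^{\rm sym}\phi\|^2\ngr$, with equality at a point exactly when all eigenvalues coincide there, that is when ${\rm Hess}\cc^{\rm sym}\phi=\frac{\psi}{2n}{\rm Id}\cc$. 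Substituting into the norm identity yields the pointwise lower bound $\|{\rm Hess}\cc\phi\|^2\ngr\ge\frac{\psi^2}{2n}+\frac{n}{2}(T\phi)^2$, equivalently $\psi^2-\|{\rm Hess}\cc\phi\|^2\ngr\le\frac{2n-1}{2n}\psi^2-\frac{n}{2}(T\phi)^2$.

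Finally I would insert this bound into the $D$-integrand on the left-hand side of Theorem~\ref{RELL}; since that left-hand side equals the surface integral on the right, integrating the bound over $D$ and transposing the terms $\frac{n}{2}(T\phi)^2$ and $2\langle\dg(T\phi),(\dg\phi)\op\rangle$ onto the domain side reproduces precisely \eqref{erllo}. I anticipate no real obstacle here: the only things to watch are the signs in the rearrangement and the observation that the difference of the two integrands is a nonnegative continuous function on the compact set $\overline D$, so equality of the integrals forces the pointwise Newton equality everywhere, which is exactly the rigidity statement ${\rm Hess}\cc^{\rm sym}\phi=\frac{\psi}{2n}{\rm Id}\cc$.
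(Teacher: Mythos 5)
Your proposal is correct and follows essentially the same route as the paper: the norm splitting $\|{\rm Hess}\cc\phi\|^2\ngr=\|{\rm Hess}\cc^{\rm sym}\phi\|^2\ngr+\frac{n}{2}(T\phi)^2$ from the preceding Remark, Newton's inequality applied to the trace-$\psi$ symmetric part, and substitution into the identity of Theorem \ref{RELL}. Your closing observation---that equality of the integrals forces the pointwise Newton equality because the difference of the integrands is continuous and nonnegative on the compact $\overline D$---is a welcome sharpening of the paper's terse ``as it is well-known'' treatment of the rigidity case.
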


Note that ${\rm Id}\cc\cong {\bf 1}_{2n\times 2n}\in\mathcal M_{2n\times 2n}(\R)$.

\begin{proof}Using Newton's inequality yields
\begin{eqnarray*}\|{\rm Hess}\cc\phi\|^2\ngr&=&\|{\rm Hess}^{\rm
sym}\cc\phi\|^2\ngr+ \frac{n}{2}(T\phi)^2\\&\geq& \frac{{\rm
Tr}^2({\rm Hess}\cc^{\rm
sym}\phi)}{2n}+\frac{n}{2}(T\phi)^2\\&=& \frac{\Delta\cc\phi}{2n}+\frac{n}{2}(T\phi)^2\\
&=&\frac{\psi^2}{2n}+\frac{n}{2}(T\phi)^2.\end{eqnarray*}As it
is well-known, one has equality in this inequality if, and only
if, ${\rm Hess}\cc^{\rm sym}\phi=\frac{\psi}{2n}{\rm Id}\cc$.
From this argument and Theorem \ref{RELL} we easily get
\eqref{erllo} and the thesis follows.\end{proof}

The next three corollaries will follow from Theorem \ref{RELL} by making appropriate choices of the \textquotedblleft test function\textquotedblright $\varphi:D\longrightarrow\R$.

\begin{corollario}\label{c1f}Let $D\subset\mathbb{H}^n$ and let $S=\partial D$ be a
$\cont^2$-smooth compact (closed)
 hypersurface. Then $$
\int_S\left\{ \MS\left\langle V,\nn\right\rangle^2  -
S\cc\left(V\ss,V\ss\right) \right\}\perh=3\int_S \varpi\left\langle V,\nn\right\rangle\left\langle V,\nn\op\right\rangle\,\perh.$$

\end{corollario}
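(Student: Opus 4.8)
The plan is to specialize Theorem \ref{RELL} to the affine test function whose $\HH$-gradient is the prescribed left-invariant horizontal vector field $V$. Writing $V=\sum_{k=1}^{n}(a_kX_k+b_kY_k)$ with constants $a_k,b_k\in\R$, I would take
\[
\phi:=\sum_{k=1}^{n}(a_kx_k+b_ky_k),\qquad \varphi:=\phi|_S,
\]
so that $\dg\phi=V$ on all of $\mathbb H^n$. The whole point of this choice is that it simultaneously annihilates every term of the interior integral in Theorem \ref{RELL}.

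Indeed, each horizontal derivative $X_k\phi=a_k$, $Y_k\phi=b_k$ is constant, so every second horizontal derivative vanishes; hence the horizontal Hessian is zero, ${\rm Hess}\cc\phi=0$, and likewise $\psi=\Delta\cc\phi=0$ (this also uses the flatness $\gc_{X_i}X_j=0$). Since $\phi$ is independent of the vertical coordinate, $T\phi=0$, so $\dg(T\phi)=0$ and the cross term $\langle\dg(T\phi),(\dg\phi)\op\rangle$ drops out as well. Thus the left-hand side of Theorem \ref{RELL} is identically zero, and the identity reduces to the vanishing of its boundary integral alone.

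It then remains to rewrite the boundary integrand in terms of $V$. Since $\nn$ and $\nn\op$ are horizontal, $\frac{\partial\varphi}{\partial\nn}=\langle\dg\phi,\nn\rangle=\langle V,\nn\rangle$ and $\frac{\partial\varphi}{\partial\nn\op}=\langle V,\nn\op\rangle$, while $\qq\varphi=\P\ss(\dg\phi)=V\ss$ gives $S\cc(\qq\varphi,\qq\varphi)=S\cc(V\ss,V\ss)$. For the remaining term I would invoke Lemma \ref{ljjjkl}: with $\Delta\cc\phi=0$ and ${\rm Hess}\cc\phi=0$ it yields $\Delta\ss\varphi=\MS\langle V,\nn\rangle$, whence, by Definition \ref{Deflh},
\[
\lh\varphi=\Delta\ss\varphi-\varpi\langle V,\nn\op\rangle=\MS\langle V,\nn\rangle-\varpi\langle V,\nn\op\rangle.
\]

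Substituting these into the boundary integral, the inner factor becomes $\lh\varphi-\tfrac{\varpi}{2}\langle V,\nn\op\rangle=\MS\langle V,\nn\rangle-\tfrac{3\varpi}{2}\langle V,\nn\op\rangle$; multiplying by $2\langle V,\nn\rangle$ and subtracting $\MS\langle V,\nn\rangle^2+S\cc(V\ss,V\ss)$ collapses the integrand to $\MS\langle V,\nn\rangle^2-3\varpi\langle V,\nn\rangle\langle V,\nn\op\rangle-S\cc(V\ss,V\ss)$. Since its integral over $S$ vanishes, rearranging produces exactly the asserted identity. The only genuine idea is the horizontal-affine choice of test function, which kills all three interior contributions at once; the rest is bookkeeping, and the subtlety of the characteristic set (where $\varpi$ and $\nn$ are undefined) is already absorbed into Theorem \ref{RELL}, so it need not be revisited here.
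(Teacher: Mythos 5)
Your proposal is correct and follows essentially the same route as the paper: the paper's proof also tests Theorem \ref{RELL} with $\varphi=\left\langle V,\x\right\rangle$ (your $\sum_k(a_kx_k+b_ky_k)$), observes that $\dg\phi=V$, $T\phi=0$, $\Delta\cc\phi=0$ and ${\rm Hess}\cc\phi=0$ kill the interior integral, and substitutes the same boundary quantities $\frac{\partial\varphi}{\partial\nn}=\left\langle V,\nn\right\rangle$, $\qq\varphi=V\ss$ and $\lh\varphi=\MS\left\langle V,\nn\right\rangle-\varpi\left\langle V,\nn\op\right\rangle$ before rearranging exactly as you do. The only cosmetic difference is that you obtain $\Delta\ss\varphi=\MS\left\langle V,\nn\right\rangle$ via Lemma \ref{ljjjkl}, whereas the paper states it directly as $\div\ss(\qq\varphi)$.
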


\begin{proof} Let $V\in\XH$ be a constant left-invariant vector field and take $\varphi=\left\langle V,\x\right\rangle$. Then, we have
\begin{itemize}
 \item $\dg\phi=V$;
\item $T\phi=0$;
\item $\Delta\cc\phi=0$;
\item ${\rm
Hess}\cc\phi=0_{2n\times2n}$;
\item $\frac{\partial\varphi}{\partial\nn}=\left\langle V,\nn\right\rangle$;
\item $\frac{\partial\varphi}{\partial\nn\op}=\left\langle V,\nn\op\right\rangle$;
\item $\qq\varphi=V\ss=V-\left\langle V,\nn\right\rangle\,\nn$;
\item $\Delta\ss\varphi=\div\ss\left(\qq\varphi\right)=\MS\left\langle V,\nn\right\rangle$;
\item $\lh\varphi=\left\langle\left(\MS\nn-\varpi\nn^\perp\right), V\right\rangle=\MS\left\langle V,\nn\right\rangle-\varpi\left\langle V,\nn^\perp\right\rangle$.
\end{itemize}
By substituting the previous calculations into the identity of Theorem \ref{RELL}, we get that the left hand side of the identity vanishes. Therefore, one has
$$
\int_S\left\{2\left\langle V,\nn\right\rangle\left(\MS\left\langle V,\nn\right\rangle-\frac{3}{2}\varpi\left\langle V,\nn\op\right\rangle\right)-\MS
\left\langle V,\nn\right\rangle^2-
S\cc\left(V\ss,V\ss\right) \right\}\perh=0.$$Hence
$$
\int_S\left\{ \MS\left\langle V,\nn\right\rangle^2 -3\varpi\left\langle V,\nn\right\rangle\left\langle V,\nn\op\right\rangle  -
S\cc\left(V\ss,V\ss\right) \right\}\perh=0,$$which is equivalent to the thesis.
\end{proof}

\begin{corollario}\label{c2f}Let $D\subset\mathbb{H}^n$ and let $S=\partial D$ be a
$\cont^2$-smooth compact (closed)
 hypersurface. Then
 $$\Vol(D)= \frac{1}{2n}\left\{\int_S  3 \int_S\varpi\left\langle\x,\nn\right\rangle\left\langle\x,\nn\op\right\rangle\,\per- \int_S\left\{\MS \left\langle\xp,\nn\right\rangle^2 -S\cc\left(\xsp,\xsp\right)\right\}\perh\right\}.$$
\end{corollario}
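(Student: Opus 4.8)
The plan is to apply Theorem \ref{RELL} to the (scaled) vertical coordinate function $\phi := 2t$, whose restriction to $S$ will serve as the test function $\varphi$. The reason for this choice is that its horizontal gradient is exactly the rotated position field: since $X_i\phi = -y_i$ and $Y_i\phi = x_i$, one has $\dg\phi = \xp = -C\cc^{2n+1}\x$. This is what produces the $\xp$-dependence and the factor $\frac{1}{2n}$ seen in the statement.

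First I would record the interior ingredients. Because $\dg\phi = \xp$ and $\gc$ is flat, $\psi = \Delta\cc\phi = \divh\xp = 0$, while the horizontal Hessian is the constant skew matrix ${\rm Hess}\cc\phi = -C\cc^{2n+1}$ (consistent with ${\rm Hess}\cc^{\rm skew}\phi = -\frac{T\phi}{2}C\cc^{2n+1}$ and $T\phi = 2$), so that $\|{\rm Hess}\cc\phi\|^2\ngr = \|C\cc^{2n+1}\|^2\ngr = 2n$. Since $T\phi\equiv 2$ is constant, $\dg(T\phi)=0$ and the cross term $2\langle\dg(T\phi),(\dg\phi)\op\rangle$ drops out. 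Hence the interior side of Theorem \ref{RELL} collapses to $-\int_D 2n\,\sigma^{2n+1}\rr = -2n\,\Vol(D)$.

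Next I would assemble the boundary ingredients. Here $\frac{\partial\varphi}{\partial\nn} = \langle\xp,\nn\rangle$ and $\qq\varphi = \xsp$, and, using that $C\cc^{2n+1}$ is orthogonal (skew with square $-{\rm Id}$), $\frac{\partial\varphi}{\partial\nn\op} = \langle\xp,\nn\op\rangle = \langle\x,\nn\rangle$. The one genuine computation is $\lh\varphi = \lg(\qq\varphi) = \div\ss\xsp - \varpi\langle\nn\op,\xsp\rangle$. To evaluate $\div\ss\xsp$ I would work in an adapted frame and use the identity $\gc_W\xp = W\op$ (which follows from $\gc_W\x = W$ together with the flatness of $\gc$): the contributions $\langle\tau_j\op,\tau_j\rangle$ vanish by skew-symmetry, and the remaining piece, after $\langle\gc_{\tau_j}\nn,\tau_j\rangle = -B\cc(\tau_j,\tau_j)$, gives $\div\ss\xsp = \MS\langle\xp,\nn\rangle$. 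Combined with $\langle\nn\op,\xsp\rangle = \langle\x,\nn\rangle$ this yields $\lh\varphi = \MS\langle\xp,\nn\rangle - \varpi\langle\x,\nn\rangle$.

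Finally I would substitute all of this into the boundary integrand of Theorem \ref{RELL}, equate the result to $-2n\,\Vol(D)$, and simplify: the $\MS\langle\xp,\nn\rangle^2$ contributions combine with coefficient $1$, the term $S\cc(\xsp,\xsp)$ is carried along, and the several $\varpi$-pairings collapse to a single multiple of $\varpi\langle\x,\nn\rangle\langle\x,\nn\op\rangle$ after invoking $\langle\xp,\nn\rangle = -\langle\x,\nn\op\rangle$; solving for $\Vol(D)$ then produces the asserted identity. I expect the main effort to be concentrated in the evaluation of $\div\ss\xsp$ (equivalently of $\lh\varphi$) and in the careful sign bookkeeping of the $\varpi$-terms, where the interchanges among the pairings $\langle\xp,\nn\rangle$, $\langle\x,\nn\op\rangle$, $\langle\x,\nn\rangle$ and $\langle\xp,\nn\op\rangle$ must all be kept consistent.
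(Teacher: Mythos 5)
Your proposal is correct and follows essentially the same route as the paper's own proof: the same test function $\phi=2t$, the same interior data ($\psi=0$, ${\rm Hess}\cc\phi=-C\cc^{2n+1}$, vanishing cross term, so the solid integral collapses to $-2n\,\Vol(D)$) and the same boundary data $\frac{\partial\varphi}{\partial\nn\op}=\left\langle\x,\nn\right\rangle$ and $\lh\varphi=\MS\left\langle\xp,\nn\right\rangle-\varpi\left\langle\x,\nn\right\rangle$; in fact you give slightly more detail than the paper, which merely asserts $\div\ss\xsp=\MS\left\langle\xp,\nn\right\rangle$ where you derive it from $\gc_W\xp=W\op$ in an adapted frame. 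The only caveat is the final sign bookkeeping: applying your (correct) identity $\left\langle\xp,\nn\right\rangle=-\left\langle\x,\nn\op\right\rangle$ to the common final display $-2n\,\Vol(D)=\int_S\left\{\MS\left\langle\xp,\nn\right\rangle^2-3\varpi\left\langle\x,\nn\right\rangle\left\langle\xp,\nn\right\rangle-S\cc\left(\xsp,\xsp\right)\right\}\perh$ produces the $\varpi$-term with the opposite sign to the corollary as printed, but this discrepancy is already present in the paper's own passage from its last display to the stated formula (which tacitly treats $\left\langle\xp,\nn\right\rangle$ and $\left\langle\x,\nn\op\right\rangle$ as equal) and is not a defect of your argument.
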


\begin{proof}Let   $\varphi=2t$. Then, we have
 \begin{itemize}
 \item $\dg\phi=\xp$,
\item $T\phi=2$
\item $\Delta\cc\phi=0$,
\item ${\rm Hess}\cc\phi=-C^{2n+1}\cc$
\item $\qq\varphi=\xsp=\xp-\left\langle \xp,\nn\right\rangle\,\nn$,
\item $\Delta\ss\varphi=\div\ss\left(\qq\varphi\right)=\MS\left\langle \xp,\nn\right\rangle$,
\item $\lh\varphi=\MS\left\langle \xp,\nn\right\rangle-\varpi\left\langle\x,\nn\right\rangle $.\end{itemize}
We also stress that $$ \lh\varphi=-\left\langle\left(\MS\nn\op+\varpi\nn\right), \x\right\rangle=-\left\langle\left(\MS\nn-\varpi\nn\op\right)\op, \x\right\rangle= \left\langle\left(\MS\nn-\varpi\nn\op\right), \xp\right\rangle.$$Hence, using Theorem \ref{RELL} yields\begin{eqnarray*} &&-\int_D \left\|C^{2n+1}\cc\right\|^2\ngr\sigma^{2n+1}\rr =-2n\Vol(D)\\&=&
\int_S\left\{2\left\langle \xp,\nn\right\rangle\left(\MS\left\langle \xp,\nn\right\rangle-\varpi\left\langle\x,\nn\right\rangle -\frac{\varpi}{2}\left\langle\x,\nn\right\rangle\right)-\MS
\left\langle \xp,\nn\right\rangle^2-
S\cc\left(\xsp,\xsp\right) \right\}\perh\\&=&
\int_S\left\{ \MS\left\langle \xp,\nn\right\rangle^2 -3\varpi\left\langle\x,\nn\right\rangle\left\langle \xp,\nn\right\rangle -
S\cc\left(\xsp,\xsp\right) \right\}\perh,\end{eqnarray*}which implies the thesis.
\end{proof}

\begin{corollario}\label{c3f}Let $D\subset\mathbb{H}^n$ and let $S=\partial D$ be a
$\cont^2$-smooth compact (closed)
 hypersurface. Then $$\Vol(D)= \frac{1}{2n(2n-1)}\left\{\int_S  3 \int_S\varpi\left\langle\x,\nn\right\rangle\left\langle\x,\nn\op\right\rangle\,\per- \int_S\left\{\MS \left\langle\x,\nn\right\rangle^2 -S\cc\left(\xs,\xs\right)\right\}\perh\right\}.$$
\end{corollario}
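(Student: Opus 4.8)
The plan is to apply Theorem~\ref{RELL} to the ambient function $\phi:=\tfrac{1}{2}\sum_{i=1}^n(x_i^2+y_i^2)=\tfrac{|z|^2}{2}$ (the squared horizontal ``radial'' function, whose value at a point equals $\tfrac{|\x|^2}{2}$), taking $\psi:=\Delta\cc\phi$ and boundary datum $\varphi:=\phi|_S$. This is the natural partner of the choices $\varphi=\langle V,\x\rangle$ and $\varphi=2t$ of Corollaries~\ref{c1f} and~\ref{c2f}: whereas $2t$ produces a skew horizontal Hessian $-C^{2n+1}\cc$, the function $\tfrac{|z|^2}{2}$ produces the symmetric, diagonal Hessian ${\rm Hess}\cc\phi={\rm Id}\cc$, and it is exactly this symmetric choice that reproduces the $\x$- and $\xs$-terms appearing in the statement.

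First I would record the elementary data. From $X_i\phi=x_i$ and $Y_i\phi=y_i$ one gets $\dg\phi=\x$, $T\phi=0$, $\Delta\cc\phi=2n=:\psi$, and ${\rm Hess}\cc\phi={\rm Id}\cc$, so that $\|{\rm Hess}\cc\phi\|^2\ngr=2n$ (its skew part vanishes, as it must since $T\phi=0$). Restricting to $S$ gives $\tfrac{\partial\varphi}{\partial\nn}=\langle\x,\nn\rangle$, $\tfrac{\partial\varphi}{\partial\nn\op}=\langle\x,\nn\op\rangle$ and $\qq\varphi=\xs$. The one computation that is not purely formal is $\Delta\ss\varphi$: by Lemma~\ref{ljjjkl}, $\Delta\ss\varphi=\Delta\cc\phi+\MS\langle\x,\nn\rangle-\langle{\rm Hess}\cc\phi\,\nn,\nn\rangle=2n+\MS\langle\x,\nn\rangle-1=(2n-1)+\MS\langle\x,\nn\rangle$, whence $\lh\varphi=\Delta\ss\varphi-\varpi\langle\x,\nn\op\rangle=(2n-1)+\MS\langle\x,\nn\rangle-\varpi\langle\x,\nn\op\rangle$. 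I would stress that the extra constant $2n-1$, absent in Corollaries~\ref{c1f} and~\ref{c2f} because there $\Delta\cc\phi=0$ and $\langle{\rm Hess}\cc\phi\,\nn,\nn\rangle=0$, equals precisely $\Delta\cc\phi-\langle{\rm Hess}\cc\phi\,\nn,\nn\rangle=2n-1$, and it is this constant that eventually forces the factor $(2n-1)$ into the denominator.

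Substituting into Theorem~\ref{RELL}, the bulk integral on the left collapses (because $T\phi=0$) to $\int_D(\psi^2-\|{\rm Hess}\cc\phi\|^2\ngr)\sigma^{2n+1}\rr=(4n^2-2n)\Vol(D)=2n(2n-1)\Vol(D)$. Expanding $2\tfrac{\partial\varphi}{\partial\nn}\bigl(\lh\varphi-\tfrac{\varpi}{2}\tfrac{\partial\varphi}{\partial\nn\op}\bigr)-\MS\bigl(\tfrac{\partial\varphi}{\partial\nn}\bigr)^2-S\cc(\qq\varphi,\qq\varphi)$ with the data above, the boundary integrand becomes $2(2n-1)\langle\x,\nn\rangle+\MS\langle\x,\nn\rangle^2-3\varpi\langle\x,\nn\rangle\langle\x,\nn\op\rangle-S\cc(\xs,\xs)$; the coefficient $3$ of the $\varpi$-term arises by combining the $-\varpi\langle\x,\nn\op\rangle$ inside $\lh\varphi$ with the separate $-\tfrac{\varpi}{2}\tfrac{\partial\varphi}{\partial\nn\op}$ of the formula and multiplying by $2\langle\x,\nn\rangle$.

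The final step, and the only genuine obstacle, is to remove the leftover linear term $2(2n-1)\int_S\langle\x,\nn\rangle\perh$, which does not occur in the target identity. For this I would apply the horizontal divergence theorem $\int_D\div\cc W\,\sigma^{2n+1}\rr=\int_S\langle W,\nn\rangle\perh$ (the Divergence Theorem already invoked in the proof of Theorem~\ref{RELL}) to the horizontal field $W=\dg\phi=\x$: since $\div\cc\x=\Delta\cc\phi=2n$, this gives $\int_S\langle\x,\nn\rangle\perh=2n\Vol(D)$. Replacing the linear term by $4n(2n-1)\Vol(D)$ and transferring it to the left-hand side leaves $-2n(2n-1)\Vol(D)=\int_S\{\MS\langle\x,\nn\rangle^2-3\varpi\langle\x,\nn\rangle\langle\x,\nn\op\rangle-S\cc(\xs,\xs)\}\perh$; multiplying by $-1$ and dividing by $2n(2n-1)$ yields exactly the asserted formula. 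Apart from this elimination of the spurious linear term and the careful bookkeeping of the extra constant in $\Delta\ss\varphi$, the argument is just a direct substitution into Theorem~\ref{RELL}.
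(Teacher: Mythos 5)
Your proposal is correct and takes essentially the same route as the paper: the same test function $\phi=\tfrac{|z|^2}{2}$ (so $\varphi=\rho^2/2$ on $S$), the same substitution of $\dg\phi=\x$, $T\phi=0$, $\Delta\cc\phi=2n$, ${\rm Hess}\cc\phi={\bf 1}_{2n\times 2n}$ into Theorem~\ref{RELL}, and the same final elimination of the linear term $2(2n-1)\int_S\left\langle\x,\nn\right\rangle\perh$ via the divergence theorem identity $\int_S\left\langle\x,\nn\right\rangle\perh=2n\,\Vol(D)$. The only cosmetic difference is that you obtain $\Delta\ss\varphi=(2n-1)+\MS\left\langle\x,\nn\right\rangle$ from Lemma~\ref{ljjjkl}, whereas the paper records it directly as $\div\ss(\qq\varphi)$; the two computations agree.
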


\begin{proof}Let   $\varphi= \frac{\rho^2}{2}$, where $\rho:=\|\x\|\eu=\sqrt{\sum_{i=1}^h x_i^2}$. Then, we have
 \begin{itemize}
 \item $\dg\phi=\x$,
\item $T\phi=0$
\item $\Delta\cc\phi=2n$,\item ${\rm Hess}\cc\phi={\bf 1}_{2n\times 2n}$
\item $\qq\varphi=\xs=\x-\left\langle \x,\nn\right\rangle\,\nn$,
\item $\Delta\ss\varphi=\div\ss\left(\qq\varphi\right)=(h-1)+\MS \left\langle\x,\nn\right\rangle$,
\item $\lh\varphi=(2n-1)+\left\langle\left(\MS\nn-\varpi\nn\op\right),\x\right\rangle=(2n-1)+\MS \left\langle\x,\nn\right\rangle-\varpi\left\langle\x,\nn\op\right\rangle$.\end{itemize}Thus, substituting these computations into the   identity of Theorem \ref{RELL} yields
\begin{eqnarray*}&&\int_D \left((2n)^2-2n\right)\sigma^{2n+1}\rr=2n(2n-1)\Vol(D)\\
&=&\int_S\left\{2 \left\langle\x,\nn\right\rangle\left((2n-1)+\MS \left\langle\x,\nn\right\rangle-\varpi\left\langle\x,\nn\op\right\rangle-\frac{1}{2}\varpi\left\langle\x,\nn\op\right\rangle\right)-\MS\left\langle\x,\nn\right\rangle^2-
S\cc\left(\xs,\xs\right)\right\}\perh\\&=&\int_S\left\{2(2n-1)\left\langle\x,\nn\right\rangle +\MS \left\langle\x,\nn\right\rangle^2 -3\varpi\left\langle\x,\nn\right\rangle\left\langle\x,\nn\op\right\rangle -
S\cc\left(\xs,\xs\right)\right\}\perh.\end{eqnarray*}
Since $\int_D\div\rr\x\,\sigma^{2n+1}=2n\Vol(D)=\int_S\left\langle\x,\nn\right\rangle\,\per$, we get that
$$2n(2n-1)\Vol(D)+\int_S\left( \MS \left\langle\x,\nn\right\rangle^2 -3\varpi\left\langle\x,\nn\right\rangle\left\langle\x,\nn\op\right\rangle -
S\cc\left(\xs,\xs\right)\right)\perh=0.$$This can be rewritten as
$$\Vol(D)= \frac{1}{2n(2n-1)}\int_S\left( -\MS \left\langle\x,\nn\right\rangle^2 +3\varpi\left\langle\x,\nn\right\rangle\left\langle\x,\nn\op\right\rangle +
S\cc\left(\xs,\xs\right)\right)\perh,$$which is equivalent to the thesis.
\end{proof}

Another interesting  formula can be obtained by using jointly both Corollary \ref{c2f} and Corollary  \ref{c3f}.

\begin{corollario}\label{c4f}Let $n>1$, let $D\subset\mathbb{H}^n$ and let $S=\partial D$ be a
$\cont^2$-smooth compact (closed)
 hypersurface. Then
$$\Vol(D)=\frac{1}{4n(n-1)}\int_S\left\{-\MS \left(\left\langle\x,\nn\right\rangle^2-\left\langle\xp,\nn\right\rangle^2\right) +\left[S\cc\left(\xs,\xs\right)-S\cc\left(\xsp,\xsp\right)\right] \right\}\perh.$$
\end{corollario}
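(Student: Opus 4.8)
The plan is purely algebraic: I would eliminate the only genuinely new quantity appearing in the two preceding volume formulas, namely the ``mixed'' boundary integral $3\int_S\varpi\langle\x,\nn\rangle\langle\x,\nn\op\rangle\,\per$, by taking a suitable linear combination of Corollary \ref{c2f} and Corollary \ref{c3f}. The key observation is that this term enters \emph{both} corollaries with exactly the same sign and weight inside the braces, so a single subtraction annihilates it. To organize the computation I would abbreviate
$$A:=3\int_S\varpi\langle\x,\nn\rangle\langle\x,\nn\op\rangle\,\per,\quad B:=\int_S\left[\MS\langle\xp,\nn\rangle^2-S\cc(\xsp,\xsp)\right]\perh,\quad C:=\int_S\left[\MS\langle\x,\nn\rangle^2-S\cc(\xs,\xs)\right]\perh,$$
so that Corollary \ref{c2f} reads $2n\,\Vol(D)=A-B$ and Corollary \ref{c3f} reads $2n(2n-1)\,\Vol(D)=A-C$.

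Next I would subtract the first identity from the second. Since $A$ cancels, this yields $\left[2n(2n-1)-2n\right]\Vol(D)=B-C$, that is $2n(2n-2)\,\Vol(D)=B-C$, which is precisely $4n(n-1)\,\Vol(D)=B-C$. Here the hypothesis $n>1$ is exactly what is needed to ensure $4n(n-1)\neq 0$, so that one may divide through and solve for $\Vol(D)$.

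Finally I would expand the difference $B-C$ and rearrange to match the stated integrand. Grouping the mean-curvature and second-fundamental-form contributions gives
$$B-C=\int_S\left\{-\MS\left(\langle\x,\nn\rangle^2-\langle\xp,\nn\rangle^2\right)+\left[S\cc(\xs,\xs)-S\cc(\xsp,\xsp)\right]\right\}\perh,$$
and dividing by $4n(n-1)$ reproduces verbatim the formula in the statement. There is no analytic obstacle in this argument: all the work has already been done in establishing Corollary \ref{c2f} and Corollary \ref{c3f}, and the present result is a formal elimination. The one point requiring care is the bookkeeping of coefficients — in particular, confirming that the mixed term $A$ occurs identically in the two corollaries so that it really does cancel under subtraction; once that is verified, the remainder is elementary arithmetic.
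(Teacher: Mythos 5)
Your proposal is correct and coincides with the paper's own argument: the paper proves Corollary \ref{c4f} by declaring it immediate from Corollaries \ref{c2f} and \ref{c3f}, which is exactly the subtraction you carry out, with the mixed term $3\int_S\varpi\left\langle\x,\nn\right\rangle\left\langle\x,\nn\op\right\rangle\perh$ cancelling and the coefficient $2n(2n-1)-2n=4n(n-1)$ emerging, the hypothesis $n>1$ ensuring this is nonzero. Your $A$, $B$, $C$ bookkeeping and the final rearrangement of $B-C$ reproduce the intended one-line proof verbatim.
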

\begin{proof}Immediate.\end{proof}

\subsection{An integral formula for the horizontal mean curvature}\label{SECTION4.1}
Let $D\subset \mathbb H^n$ be a smooth, say $\cont^3$,  bounded domain (i.e. open and connected) and assume that there exists a (global) defining function $f:\mathbb H^n\longrightarrow\R$ for $D$. This means that\begin{itemize}
 \item $D=\left\lbrace  x\in \mathbb H^n: f(x)<0\right\rbrace$,
\item $D^c=\mathbb H^n\setminus D=\left\lbrace  x\in \mathbb H^n: f(x)>0\right\rbrace$,
\item $ \grad\,f\neq 0$ at every point $x\in \partial D$.
\end{itemize}
From now on we will set $S:=\partial D$. The outward unit normal along $S$ is given by $\nu=\frac{ \grad\,f}{| \grad\,f|}$.
It will be useful to replace the  defining function $f$ for $D$ with the function $\widetilde{f}=\frac{f}{|\dg f|}$.
In fact, this new function has the remarkable feature that $|\dg \widetilde{f}|=1$ along $S$. However, in general,
the function $\widetilde{f}$ is just of class $\cont^2$ (i.e  is one order of differentiability less smooth than $f$) on $S\setminus C_S$ and fails to be smooth only at $C_S$.

Thus, applying Theorem \ref{RELL} to the function $\phi:=\widetilde{f}$ yields the formula:
\begin{eqnarray}\label{pfa} \int_D \left\{\left(\Delta\cc\phi\right)^2-\|{\rm
Hess}\cc\phi\|^2\ngr+2\left\langle \dg \left(T\phi\right)
,(\dg\phi)\op\right\rangle\right\}\sigma^{2n+1}\rr=
-\int_S\MS\,\perh.\end{eqnarray}Note that we have used $\grad\,\varphi=\nu$ (which implies $\qq\varphi=0$ and $\lh\varphi=0$) where $\varphi=\phi|_S$.

Now let $S$ be a compact $\cont^3$-smooth embedded hypersurface. A similar formula can be obtained when we
 consider a foliation of a small spatial neighborhood of $S$.  More precisely, let  $f:\mathbb H^n\times]-\varepsilon, \varepsilon[\longrightarrow\R$  be a $\cont^3$-smooth function such that:
\begin{itemize}
 \item $S_t=\{x\in\mathbb H^n: f_t(x)=f(x, t)=0\,\,\forall\,  t\in]-\varepsilon, \varepsilon[\}$,
\item $| \grad\,f_t|\neq 0$ along $S_t$ for every $t\in]-\varepsilon, \varepsilon[$,
\item $|\dg f_t|=1$ at each NC point of $S_t$.
\end{itemize}
Moreover, let $D:=\left\lbrace x\in\mathbb H^n: f_t(x)\in]-\varepsilon, \varepsilon[\right\rbrace$ and set $S^\pm=\left\lbrace x\in\mathbb H^n: f_t(x)=\pm\varepsilon\right\rbrace $.

We again apply Theorem \ref{RELL} to the function $\phi:=f$ and we similarly get
\begin{eqnarray}\label{pfa2} \int_D \left\{\left(\Delta\cc\phi\right)^2-\|{\rm
Hess}\cc\phi\|^2\ngr+2\left\langle \dg \left(T\phi\right)
,(\dg\phi)\op\right\rangle\right\}\sigma^{2n+1}\rr=
-\int_{S^+\cup S^-}\MS\,\perh,\end{eqnarray}where we have used $\grad\,\varphi=\nu$ (so that $\qq\varphi=0$ and $\lh\varphi=0$) and $\varphi=\phi|_S$.
The previous assumptions allow us to say something more. But before this, we need
 the following  corollary of the classical  Coarea formula:
\begin{Prop} Let
$D\subset\mathbb H^n$ be a smooth domain and let
$\phi\in\cont^1(D)$. Then
\begin{eqnarray} \int_{D}\psi|\dg\phi(x)|\,\sigma^{2n+1}\rr(x)=\int_{\R}ds\left( \int_{\phi^{-1}[s]\cap
D}\psi\,\perh\right).\end{eqnarray}\end{Prop}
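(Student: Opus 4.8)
The plan is to derive this as a direct consequence of the classical (Riemannian, or Euclidean) Coarea formula, which in the present setting reads $\int_D g(x)\,|\grad\,\phi(x)|_{\mathrm{e}}\,d\mathcal{L}^{n}=\int_{\R}\!\big(\int_{\phi^{-1}[s]\cap D}g\,d\mathcal{A}^{n-1}\big)ds$ for any integrable $g$, where $|\cdot|_{\mathrm{e}}$ is the Euclidean norm of the full gradient and $d\mathcal{A}^{n-1}$ is the Euclidean surface measure on the level sets $\phi^{-1}[s]$. The task is therefore purely bookkeeping: rewrite both sides of this Euclidean identity in sub-Riemannian terms so that the Euclidean gradient-norm $|\grad\,\phi|_{\mathrm{e}}$ becomes the horizontal gradient-norm $|\dg\phi|$ and the Euclidean surface measure becomes the $\HH$-perimeter measure $\perh$.

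First I would apply the standard Coarea formula to the function $g:=\psi\,|\dg\phi|/|\grad\,\phi|_{\mathrm{e}}$, so that the left-hand integrand $g\,|\grad\,\phi|_{\mathrm{e}}$ collapses to exactly $\psi\,|\dg\phi|$, producing the desired left-hand side $\int_D\psi\,|\dg\phi|\,\sigma^{2n+1}\rr$ (recall the Haar measure is obtained by integrating $\sigma^{2n+1}\rr$, so it coincides with $d\mathcal{L}^{n}$). On the right-hand side the same substitution leaves $\int_{\R}\big(\int_{\phi^{-1}[s]\cap D}\psi\,\frac{|\dg\phi|}{|\grad\,\phi|_{\mathrm{e}}}\,d\mathcal{A}^{n-1}\big)ds$. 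The crux is then to recognize the inner integrand: on a level set $S_s=\phi^{-1}[s]$ the unit (Riemannian) normal is $\nu=\grad\,\phi/|\grad\,\phi|_{\mathrm{e}}$, whence $|\PH\nu|=|\PH\grad\,\phi|/|\grad\,\phi|_{\mathrm{e}}=|\dg\phi|/|\grad\,\phi|_{\mathrm{e}}$, because the horizontal projection of the full Riemannian gradient is precisely the $\HH$-gradient $\dg\phi$. Substituting this gives $\frac{|\dg\phi|}{|\grad\,\phi|_{\mathrm{e}}}\,d\mathcal{A}^{n-1}=|\PH\nu|\,\sigma^{2n}\rr=\perh\res S_s$, exactly by the identity $\perh\res S=|\PH\nu|\,\sigma^{2n}\rr$ recorded in Definition \ref{sh}. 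This turns the right-hand side into $\int_{\R}\big(\int_{\phi^{-1}[s]\cap D}\psi\,\perh\big)ds$, which is the claim.

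The step I expect to be the main (though mild) obstacle is justifying the measure-theoretic manipulations at the characteristic set $C_S$, where $|\PH\nu|=|\dg\phi|$ vanishes and the quotient $|\dg\phi|/|\grad\,\phi|_{\mathrm{e}}$ degenerates. Here I would invoke the fact, recorded in Section \ref{sez22}, that $\mathcal{H}^{Q-1}_{CC}(C_S)=0$ and that $\perh\res C_S=0$ by convention, so the characteristic points contribute nothing to either side and the formal identity $\frac{|\dg\phi|}{|\grad\,\phi|_{\mathrm{e}}}\,d\mathcal{A}^{n-1}=\perh$ holds as a measure identity on almost every level set. Since $\phi\in\cont^1(D)$ guarantees $|\grad\,\phi|_{\mathrm{e}}>0$ on $D$ (away from the vanishing set of the full gradient, which has measure zero and over which the Coarea formula is insensitive), the integrand $g$ is well defined $\mathcal{L}^{n}$-almost everywhere and integrable, so the classical Coarea formula applies verbatim and the proposition follows.
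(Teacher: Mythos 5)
Your proposal is correct and takes essentially the same route as the paper, which in fact offers no proof at all: the Proposition is introduced verbatim as \textquotedblleft a corollary of the classical Coarea formula\textquotedblright, and your argument (weighting the coarea formula by $g=\psi\,|\dg\phi|/|\grad\,\phi|$ and identifying $|\PH\nu|\,\sigma^{2n}\rr=\perh$ on the level sets) is precisely the intended filling-in of that assertion. One caveat: the pointwise identity $|\PH\nu|=|\dg\phi|/|\grad\,\phi|$ holds only when the gradient, its norm, and the surface measure are all taken with respect to the left-invariant metric $h$ (whose volume form is still Lebesgue measure, so the Riemannian coarea formula applies verbatim); with the genuinely Euclidean $|\grad\,\phi|_{\mathrm{e}}$ and $d\mathcal{A}^{n-1}$ that pointwise identity is false, and your final measure identity $\frac{|\dg\phi|}{|\grad\,\phi|_{\mathrm{e}}}\,d\mathcal{A}^{n-1}=\perh$ is then rescued only by the metric-independence of the Leray measure, i.e. $d\mathcal{A}^{n-1}/|\grad\,\phi|_{\mathrm{e}}=\sigma^{2n}\rr/|\grad\,\phi|$, which deserves to be said explicitly.
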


Applying this formula yields \begin{eqnarray*}&&\int_D \left\{\left(\Delta\cc\phi\right)^2-\|{\rm
Hess}\cc\phi\|^2\ngr+2\left\langle \dg \left(T\phi\right)
,(\dg\phi)\op\right\rangle \right\}\sigma^{2n+1}\rr\\&=&\int_{-\varepsilon}^{\varepsilon}ds \left\{\int_{S_t=\phi^{-1}[s]\cap
D}\left(\MST^2-\left\|\mathcal J\cc\nt\right\|^2\ngr+ 2 \frac{\partial \varpi^t}{\partial \nn^{t\,\perp}}\right)\perh\right\},\end{eqnarray*}
where $\nt$ is the unit $\HH$-normal along $S_t$ and $\MST$ denotes the $\HH$-mean curvature of $S_t$.
Furthermore
\begin{eqnarray*}\left\|\mathcal J\cc\nt\right\|^2\ngr&=&\left\|B^t\cc+\gc_{\nt}\nt\right\|^2\ngr=\left\|S^t\cc\right\|^2\ngr+\left\|A^t\cc\right\|^2\ngr+\left\|\gc_{\nt}\nt\right\|^2\ngr\\&=&\left\|S^t\cc\right\|^2\ngr+ \frac{n-1}{2}(\varpi^t)^2+(\varpi^t)^2\left\|C\cc^{2n+1}\nt\right\|^2\ngr\\&=&\left\|S^t\cc\right\|^2\ngr+ \frac{n+1}{2}(\varpi^t)^2,\end{eqnarray*}where we have used  $B\cc^t=S^t\cc+A^t\cc$ together with the identity $\gc_{\nt}\nt=-\varpi^tC\cc^{2n+1}\nt$; see, for instance, \cite{Montestab}.
Hence
\begin{eqnarray}\label{mkl}\int_{-\varepsilon}^{\varepsilon}ds \underbrace{\int_{S_t}\left(\MST^2-\left\|S\cc^t\right\|^2\ngr+
 2 \frac{\partial \varpi^t}{\partial \nn^{t\,\perp}}-\frac{n+1}{2}(\varpi^t)^2\right)\per}_{=II_{S_t}(\nt,\perh)}=
-\int_{S^+\cup S^-}\MS\,\perh,
\end{eqnarray}where $II_{S_t}(\nt,\perh)$ is nothing but the second variation formula of the $\HH$-perimeter $(\perh)_t$ of  $S_t$ for a variation $\vartheta$  having variation vector $W_t=\frac{d}{dt}\vartheta=\nt$; see  \cite{Monteb, Montestab}.

At this point, we may apply another integral formula to each integral over $S_t$. We stress that we are assuming that each $S_t$ is a compact closed hypersurface, at least of class $\cont^2$.

\begin{lemma}\label{mio}Let  $S\subset\mathbb{H}^n$ be a $\cont^2$-smooth compact
hypersurface without boundary. Then
\begin{eqnarray}\label{id2}
 \int_S\left(\frac{\partial\varpi}{\partial\nn^{\perp}}-n\varpi^2\right)\,\perh=0,\end{eqnarray}
whenever $\varpi\nn^\perp$ is admissible  (for the horizontal divergence formula).\end{lemma}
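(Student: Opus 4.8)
The plan is to apply the horizontal divergence formula of Theorem \ref{GD2} to the horizontal tangent field $X:=\varpi\,\nn^{\perp}$, exploiting the fact that $\nn^{\perp}$ is tangent to $S$ so that the mean-curvature term on the right of \eqref{hparts} simply drops out. First I would record two elementary facts about $\nn^{\perp}=-C^{2n+1}\cc\nn$: since $C^{2n+1}\cc$ is skew-symmetric, $\langle\nn^{\perp},\nn\rangle=-\langle C^{2n+1}\cc\nn,\nn\rangle=0$, so $\nn^{\perp}\in\HS$ on $S\setminus C_S$; and since $C^{2n+1}\cc$ is orthogonal, $|\nn^{\perp}|=1$. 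Consequently $\langle\varpi\,\nn^{\perp},\nn\rangle=0$, and the standing admissibility hypothesis on $\varpi\,\nn^{\perp}$ lets me invoke the (extended, possibly-characteristic) version of Theorem \ref{GD2} to get
$$\int_S\lg(\varpi\,\nn^{\perp})\,\perh=-\int_S\MS\,\langle\varpi\,\nn^{\perp},\nn\rangle\,\perh=0.$$
Everything then hinges on a pointwise identity.

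The pointwise identity I claim is $\lg(\nn^{\perp})=-n\varpi$ on $S\setminus C_S$. Granting it, the product rule $\lg(\varphi X)=\varphi\lg X+\langle\qq\varphi,X\rangle$ (stated after Definition \ref{Deflh}) applied with $\varphi=\varpi$ and $X=\nn^{\perp}$ gives
$$\lg(\varpi\,\nn^{\perp})=\varpi\,\lg(\nn^{\perp})+\langle\qq\varpi,\nn^{\perp}\rangle=-n\varpi^{2}+\frac{\partial\varpi}{\partial\nn^{\perp}},$$
and substituting into the display above yields exactly \eqref{id2}. To establish $\lg(\nn^{\perp})=-n\varpi$, I unwind the definition of $\lg$: because $|\nn^{\perp}|^{2}=1$,
$$\lg(\nn^{\perp})=\div\ss\nn^{\perp}-\varpi\,\langle\nn^{\perp},\nn^{\perp}\rangle=\div\ss\nn^{\perp}-\varpi,$$
so it suffices to prove the purely geometric identity $\div\ss\nn^{\perp}=-(n-1)\varpi$.

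For that last identity I would work in an adapted orthonormal frame $\{\tau_1=\nn,\tau_2,\dots,\tau_{2n},T\}$. Flatness of $\gc$ (the matrix $C^{2n+1}\cc$ is constant in the left-invariant frame) gives $\gc_{\tau_j}\nn^{\perp}=-C^{2n+1}\cc\,\gc_{\tau_j}\nn$, while compatibility of $\gc$ gives $\gc_{\tau_j}\nn=-\sum_{l\in I\ss}B\cc(\tau_j,\tau_l)\,\tau_l$ (the $\nn$-component vanishes since $|\nn|=1$). Taking the $\HS$-trace turns $\div\ss\nn^{\perp}$ into the Frobenius pairing of the matrix $[B\cc(\tau_j,\tau_l)]$ with the skew matrix $M_{jl}:=\langle C^{2n+1}\cc\tau_l,\tau_j\rangle$. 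In the decomposition $B\cc=S\cc+A\cc$ the symmetric part $S\cc$ pairs to zero against the skew $M$, and the skew part, being proportional to $M$ through $A\cc=\tfrac12\varpi\,C^{2n+1}\ss$, contributes the whole trace; evaluating it with the known value $\|A\cc\|^{2}\ngr=\frac{n-1}{2}\varpi^{2}$ produces $\div\ss\nn^{\perp}=-(n-1)\varpi$.

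The main obstacle I anticipate is the sign and normalization bookkeeping in this final step: one must track carefully how the skew operator $C^{2n+1}\cc$ compresses to $\HS$ (recall $\nn^{\perp}\in\mathrm{Ker}\,A\cc$, consistent with $M\nn^{\perp}=0$), match the operator normalization of $A\cc=\tfrac12\varpi\,C^{2n+1}\ss$ with the bilinear-form entries of $B\cc$, and confirm that the symmetric part genuinely annihilates in the trace. A secondary point demanding care is the passage of the divergence theorem across the characteristic set $C_S$, where $\varpi$ is unbounded; the only thing legitimizing the global application of Theorem \ref{GD2} is precisely the admissibility hypothesis of the statement, which guarantees that $\alpha_{\varpi\,\nn^{\perp}}$ and $d\alpha_{\varpi\,\nn^{\perp}}$ extend continuously over all of $S$ so that Stokes' theorem holds without boundary contributions from $C_S$.
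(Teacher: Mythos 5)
Your proposal is correct and follows essentially the same route as the paper: both apply the horizontal divergence formula of Theorem \ref{GD2} to the admissible tangent field $\varpi\,\nn^{\perp}$ (the mean-curvature term dropping since $\langle\nn^{\perp},\nn\rangle=0$), expand $\lg(\varpi\,\nn^{\perp})=\frac{\partial\varpi}{\partial\nn^{\perp}}+\varpi\,\div\ss\nn^{\perp}-\varpi^{2}$, and reduce \eqref{id2} to the identity $\div\ss\nn^{\perp}=-\mathrm{Tr}\big(B\cc(\,\cdot\,,C^{2n+1}\ss\,\cdot)\big)=-(n-1)\varpi$. The only difference is that the paper cites this trace identity from \cite{Monteb, Montestab}, whereas you rederive it correctly from the decomposition $B\cc=S\cc+A\cc$ with $A\cc=\tfrac{1}{2}\varpi\,C^{2n+1}\ss$ and $\|A\cc\|^{2}\ngr=\frac{n-1}{2}\varpi^{2}$, making the argument self-contained.
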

\begin{proof} We have
\begin{eqnarray*}\int_S\lg(\varpi\nn^\perp)\,\perh=\int_S\left(\div\ss(\varpi\nn^\perp) -
\varpi\langle \nn^\perp, \nn^\perp\rangle\right)\,\perh=\int_S\left(\div\ss(\varpi\nn^\perp) -
\varpi^2\right)\,\perh=0.\end{eqnarray*}Since
\begin{eqnarray*} \div\ss( \varpi\nn^{\perp})&=&
  \frac{\partial\varpi}{\partial\nn^{\perp}}+\varpi \div\ss(\nn^{\perp})
\\&=&
 \frac{\partial\varpi}{\partial\nn^{\perp}}-\varpi \,
\mathrm{Tr}\big(B\cc(\,\cdot\,, C^{2n+1}\ss\,
 \cdot)\big),\end{eqnarray*}where $C^{2n+1}\ss=C^{2n+1}\cc|_{\HS}$, we get that  \begin{eqnarray}\label{id1}
 \int_S\left(\frac{\partial\varpi}{\partial\nn^{\perp}}-\varpi^2\right)\,\perh=
 \int_S\left(\mathrm{Tr}\big(B\cc(\,\cdot\,, C^{2n+1}\ss\,
 \cdot)\big)\right)  \varpi \perh.\end{eqnarray}
Since
 $\mathrm{Tr}\big(B\cc(\,\cdot\,, C^{2n+1}\ss\,
 \cdot)\big)=(n-1)\varpi$,  the thesis follows; see \cite{Monteb, Montestab}.\end{proof}

Finally, by using \eqref{mkl} and the last lemma,
 we have proved the following:

\begin{corollario}\label{Corollary4.9}Under the previous assumptions, the following holds:
$$\int_{-\varepsilon}^{\varepsilon}ds   \int_{S_t}\left(\MST^2-\left\|S\cc^t\right\|^2\ngr+   \frac{3n-1}{2}(\varpi^t)^2\right)\perh =
-\int_{S^+\cup S^-}\MS\,\perh.$$
\end{corollario}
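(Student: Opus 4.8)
The plan is to read off the stated identity from equation \eqref{mkl} by eliminating the term $2\,\partial\varpi^t/\partial\nn^{t\,\perp}$ through Lemma \ref{mio}, applied separately to each slice $S_t$. Equation \eqref{mkl} already equates the iterated integral of
$$\MST^2-\left\|S\cc^t\right\|^2\ngr+ 2\frac{\partial \varpi^t}{\partial \nn^{t\,\perp}}-\frac{n+1}{2}(\varpi^t)^2$$
with the boundary contribution $-\int_{S^+\cup S^-}\MS\,\perh$, so only the inner integrals over the $S_t$ need to be rewritten; the outer integration in $s$ is carried along untouched.

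First I would fix $t\in\,]-\varepsilon,\varepsilon[$ and use that, under the present hypotheses, each $S_t$ is a compact closed $\cont^2$-smooth hypersurface without boundary. Lemma \ref{mio}, applied to $S_t$ with $\varpi$, $\nn^\perp$ replaced by $\varpi^t$, $\nn^{t\,\perp}$, then yields
$$\int_{S_t}\frac{\partial\varpi^t}{\partial\nn^{t\,\perp}}\,\perh=n\int_{S_t}(\varpi^t)^2\,\perh,$$
whence $\int_{S_t}2\,\frac{\partial\varpi^t}{\partial\nn^{t\,\perp}}\,\perh=2n\int_{S_t}(\varpi^t)^2\,\perh$. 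Substituting this into the inner integrand of \eqref{mkl} leaves only $\MST^2$, $-\|S\cc^t\|^2\ngr$, and a single $(\varpi^t)^2$ term whose coefficient is $2n-\frac{n+1}{2}=\frac{3n-1}{2}$. Re-integrating in $s$ over $]-\varepsilon,\varepsilon[$ then reproduces the claimed formula verbatim. Since the substitution is a pointwise-in-$t$ identity inserted under the $s$-integral, no extra measure-theoretic work is needed beyond knowing $t\mapsto\int_{S_t}(\varpi^t)^2\perh$ is integrable, which follows from the $\cont^3$ regularity of the foliation.

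I expect the only genuinely delicate point to be the verification of the admissibility hypothesis of Lemma \ref{mio} for each $S_t$, i.e. that $\varpi^t\,\nn^{t\,\perp}$ is admissible for the horizontal divergence formula in the sense of Definition \ref{adm}. Because $\varpi^t$ may blow up as one approaches the characteristic set $C_{S_t}$ (where $|\PH\nu^t|\to 0$), one must check that the forms $\alpha$ and $d\alpha$ extend continuously across $C_{S_t}$, a set of vanishing $\mathcal{H}_{CC}^{Q-1}$-measure. The $\cont^3$ regularity of $f$ together with the normalization $|\dg f_t|=1$ at non-characteristic points should secure this; it is this step, rather than the algebraic recombination of coefficients, that relies essentially on the smoothness assumptions.
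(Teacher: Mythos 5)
Your proposal is correct and coincides with the paper's own argument: the paper derives Corollary \ref{Corollary4.9} in exactly this way, by applying Lemma \ref{mio} slicewise to each compact closed $S_t$ to trade $2\,\partial\varpi^t/\partial\nn^{t\,\perp}$ for $2n(\varpi^t)^2$ inside \eqref{mkl}, whence the coefficient $2n-\frac{n+1}{2}=\frac{3n-1}{2}$, a step the paper labels ``Immediate''. Your closing observation that the admissibility of $\varpi^t\,\nn^{t\,\perp}$ in the sense of Definition \ref{adm} must be checked near $C_{S_t}$ is a hypothesis the paper carries in Lemma \ref{mio} but leaves unverified for the slices, so flagging it is a refinement, not a departure.
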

\begin{proof}Immediate.\end{proof}

\rm

{\footnotesize \noindent Francescopaolo Montefalcone:
\\Dipartimento di Matematica Pura e Applicata\\Universit\`a degli Studi di Padova,\\
  Via Trieste, 63, 35121 Padova (Italy)\,
 \\ {\it E-mail address}:  {\textsf montefal@math.unipd.it}}


\begin{thebibliography}{99}\label{referenze}{\scriptsize

\bibitem{Montegv}{\sc N.~Arcozzi, F.~Ferrari \& F.~Montefalcone}, {\em CC-distance and metric
normal of smooth hypersurfaces in sub-Riemannian Carnot groups},
preprint 2009.

\bibitem{balogh} {\sc Z.M.~Balogh}, {\em Size of characteristic sets and functions
with prescribed gradient,} J. Reine Angew. Math.  564
 (2003) 63--83.

\bibitem{Bal3} {\sc Z.M.~Balogh,\, C.~Pintea,\, H.~Rohner},
{\em Size of tangencies to non-involutive distributions,} Preprint
2010.

\bibitem{CDG}{\sc L.~Capogna,\,D.~Danielli,\,N.~Garofalo},
{\em The geometric Sobolev embedding for vector fields and the
isoperimetric inequality},  Comm. Anal. Geom. {{12}}, 1994.

\bibitem{CDPT}{\sc L.~Capogna,\, D.~Danielli,\, S.~Pauls,\, J.T.~Tyson},
{\em  An introduction to the Heisenberg group and the
sub-Riemannian isoperimetric problem},   Progress in Mathematics,
vol. 259, Birkhauser Verlag, Basel, 2007.

 \bibitem{Ch1}{\sc I.~Chavel},
{\em ``Riemannian Geometry: a modern introduction''}, Cambridge
University Press, 1994.
\bibitem {vari} {\sc J.J~Cheng,\, J.F.~Hwang,\, A.~Malchiodi,\, P.~Yang}, {\em
Minimal surfaces in pseudohermitian geometry,} {Ann. Sc. Norm.
Sup. Pisa Cl. Sci.}, { 5}, IV (2005) 129-179.






\bibitem{DanGarN8} {\sc  D.~Danielli, \,N.~Garofalo, \,D.M.~Nhieu}, {\em
Minimal surfaces, surfaces of constant mean curvature and
isoperimetry in Carnot groups}, preprint 2001.



\bibitem{gar}{\leavevmode\vrule height
2pt depth -1.6pt width 23pt}, {\em Sub-Riemannian Calculus on
Hypersurfaces in Carnot groups}, arXiv:DG/0512547.











\bibitem{FE} {\sc H.~Federer}, {\em ``Geometric Measure Theory''}, Springer Verlag, 1969.



\bibitem{FSSC3}
{\sc B.~Franchi,\, R.~Serapioni,\, F.S.~Cassano}, {\em
Rectifiability and Perimeter in the Heisenberg Group}, Math. Ann.,
{ 321}, 479-531, 2001.


\bibitem{G} {\sc N.~Garofalo,\, S.~Pauls}, {\em The Berstein problem in the Heisenberg group,}
arXiv:math.DG/0209065.



\bibitem{Gr1} {\sc M.~Gromov}, {\em Carnot-Carath\'eodory spaces seen from
within}, in {\em ``Subriemannian Geometry''}, Progress in
Mathematics, { 144}. ed. by A.Bellaiche and J.Risler,
Birkhauser Verlag, Basel, 1996.

\bibitem{Helgason}{\sc S.~Helgason}, {\em ``Differential geometry, Lie groups,
and symmetric spaces''}, Academic Press, New York (1978).

\bibitem{Hicks}{\sc N.J.~Hicks},
{\em ``Notes on Differential geometry''}, Van Nostrand Reinholds
Company, London, 1971.

\bibitem{HP}{\sc R.H~Hladky,\, S.D.~Pauls},
{\em Constant mean curvature surfaces in sub-Riemannian geometry},
 J. Differential Geom.  79, no. 1 (2008).



\bibitem{LI}{\sc P~Li},
{\em Lecture notes on geometric analysis}, Lecture Notes Series, 6. Seoul National University,
Research Institute of Mathematics, Global Analysis Research Center, Seoul, 1993. iv+90 pp.








\bibitem{Mag}
{\sc V.~Magnani}, {\em Characteristic points, Rectifiability and
Perimeter measure on stratified groups}, J. Eur. Math. Soc.
(JEMS), { 8}, no. 5 (2006).


\bibitem{Monte}
{\sc F.~Montefalcone}, {\em Some Remarks in Differential and
 Integral  Geometry of Carnot Groups} {Tesi di Dottorato}--Universit\`{a}
 degli Studi di Bologna-- Aprile {2004}.

\bibitem{Monteb}{\leavevmode\vrule
height 2pt depth -1.6pt width 23pt}, {\em Hypersurfaces and
variational formulas in sub-Riemannian Carnot groups}, Journal de
Math\'ematiques Pures et Appliqu\'ees,  {87} (2007) 453-494.


\bibitem{Montestab}{\leavevmode\vrule
height 2pt depth -1.6pt width 23pt}, {\em Stable $\HH$-minimal hypersurfaces}, preprint 2012 (available at ArXiv).


\bibitem{Montgomery} {\sc R.~Montgomery}, {\em
``A Tour of Subriemannian Geometries, Their Geodesics and
Applications''}, AMS, Math. Surveys and Monographs, 91, 2002.




\bibitem{Monti}{\sc R.~Monti}, {\em Heisenberg isoperimetric problem. The axial case}, Adv. Calc. Var. 1 (2008), 93, { 121}.

\bibitem{Monti2}{\leavevmode\vrule
height 2pt depth -1.6pt width 23pt}, {\em Brunn-Minkowski and
isoperimetric inequality in the Heisenberg group}, Ann. Acad. Sci.
Fenn. Math. 28 (2003), no. 1, 99, { 109}.
\bibitem{Monti3}{\sc R. Monti, M. Rickly}, {\em Convex isoperimetric sets in the Heisenberg group}, Ann.
Sc. Norm. Super. Pisa Cl. Sci. (5) Vol. VIII, issue 2 (2009).











\bibitem{P1}{\sc P.~Pansu}, {\em ``Geometrie du Group d'Heisenberg''}, These
pour le titre de Docteur, 3\`eme cycle, Universite Paris {\rm
VII}, 1982.


\bibitem{Pansu2}{\leavevmode\vrule
height 2pt depth -1.6pt width 23pt}, {\em An isoperimetric
inequality on the Heisenberg group}, Rend. Sem. Mat. Univ.
Politec. Torino (1983), no. Special Issue, 159 {174} (1984),
Conference on differential geometry on homogeneous spaces (Turin,
1983).


\bibitem{Reilly}
{\sc R.~Reilly}, {\em Applications of the Hessian Operator in a Riemannian Manifold}, Indiana Univ. Math. J. 26 No. 3 (1977), 459–472.

\bibitem{RR}
{\sc M.~Ritor\'e, C.~Rosales}, {\em Area stationary surfaces in the
Heisenberg group $\mathbb{H}^1$}, Adv. Math. 219 no. 2 (2008).



\bibitem{Stein} {\sc E.M.~Stein}, {\em ``Harmonic Analysis''}, Princeton
University Press, 1993.






\bibitem{Taylor} {\sc M.E.~Taylor}, {\em ``Measure Theory and Integration''}, Graduate Studies in Mathematics, Vol. 76, A.M.S. (2006).}\\

\end{thebibliography}
\end{document}